\newtheorem{thm}{Theorem}
\newtheorem *{cor}{Corollary}
\newtheorem{lem}{Lemma}
\newtheorem *{prop}{Proposition}
\newtheorem *{hyp}{Hypothesis}
\newtheorem *{mainthm*}{Main Theorem}
\theoremstyle{definition}
\newtheorem{defin}{Definition}
\newtheorem*{xrem}{Remark}
\numberwithin{equation}{section}
\DeclareMathOperator*{\psum}{{\sum}^\prime}
\newcommand{\chisum}
{\sum_{\substack{\chi_1\ppmod{q_1}\\\chi_2\ppmod{q_2}}}}
\newcommand{\ppmod}[1]{\ (\mathrm{mod}\ {#1})}
\renewcommand{\phi}{\varphi}
\renewcommand{\epsilon}{\varepsilon}
\renewcommand{\Re}{\mathrm{Re}\,}
\renewcommand{\Im}{\mathrm{Im}\,}
\begin{document}
\title[the representation function for the sum of  two primes in A.P.]
{A mean value of  the representation function
\\for the sum of two primes in arithmetic progressions}

\author[Y. Suzuki]{Yuta Suzuki}
\address{Graduate School of Mathematics\\
Nagoya University\\
Chikusa-ku, Nagoya 464-8602, Japan.}
\email{m14021y@math.nagoya-u.ac.jp}
\date{}

\begin{abstract}
In this note, assuming a variant of the Generalized Riemann Hypothesis,
which does not exclude the existence of real zeros,
we prove an asymptotic formula 
for the mean value of the representation function
for the sum of two primes in arithmetic progressions.
This is an improvement of the result of F. R\"uppel in 2009,
and the generalization of the result of A. Languasco and A. Zaccagnini
concerning the ordinary Goldbach problem in 2012.
\end{abstract}

\subjclass[2010]{Primary 11P32; Secondary 11P55.}
\keywords{Goldbach conjecture; Mean value; Circle Method.}

\maketitle

\section{Introduction}
In this note, we consider the sum of two primes in arithmetic progressions.
For the conventional studies on this additive problem,
for example,
see Lavrik \cite{Lavrik} or Liu and Zhan \cite{LIU_ZHAN}.
They gave some estimates on the exceptional set for this problem.
In the following, although it is rather an indirect way,
we shall consider this problem in some average sense as R\"uppel did in
\cite{Ruppel1,Ruppel2}.

For this additive problem, the usual weighted representation function is given by
\[R(n,q_1,a_1,q_2,a_2):=
\sum_{\substack{m_1+m_2=n\\m_i\equiv a_i\pmod{q_i}}}
\Lambda(m_1)\Lambda(m_2),\]
where $\Lambda(n)$ is the von Mangoldt function
and $a_1,a_2,q_1,q_2,n$ are positive integers satisfying $(a_1,q_1)=(a_2,q_2)=1$.
Let us also introduce an abbreviation
\[R(n,q,a,b):=R(n,q,a,q,b)\]
for positive integers $a,b,q$ satisfying $(ab,q)=1$.
In 2009, R\"uppel \cite{Ruppel1} studied the mean value
of this representation function, i.e.
\begin{equation}
\label{one_modulus}
\sum_{n\le X}R(n,q,a,b).
\end{equation}
In particular, she obtained,
under a weakened variant of the Generalized Riemann Hypothesis,
an asymptotic formula for the mean value (\ref{one_modulus}).
More precisely, she assumed the Generalized Riemann Hypothesis (GRH)
except the existence of real zeros,
i.e. she assumed
\footnote{Her hypothesis was actually somewhat stronger than ours.}
\begin{hyp}[GRH with real zeros]
\label{GRHR}
Every complex non-trivial
\footnote{
Here we say that a zero $\rho$ of a Dirichlet $L$ function is \textit{non-trivial}
if $0<\Re \rho <1$, and \textit{complex} if $\Im\rho\neq0$.
}
zeros of Dirichlet L functions lie on the critical line $\sigma=1/2$.
\end{hyp}
In this note, we also assume this hypothesis following her and
we call this hypothesis GRHR shortly.
Note that the Riemann zeta function has no real non-trivial zero.

Assuming this hypothesis GRHR, R\"uppel \cite{Ruppel1} proved
\footnote{
This is somewhat different from what R\"uppel claimed,
but she essentially proved this.
}
\begin{equation}
\label{ruppel}
\sum_{n\le X}R(n,q,a,b)
=\frac{X^2}{2\phi(q)^2}+O\left(X^{1+\delta}(\log q)^2\right),
\end{equation}
where $\delta=1/2$ unless real zeros exist for the modulus $q$,
in which case we let $\delta$ be the largest one among these real zeros.

She considered the mean value (\ref{one_modulus}),
but we can also obtain the corresponding result for the mean value
\begin{equation}
\label{two_modulus}
\sum_{n\le X}R(n,q_1,a_1,q_2,a_2)
\end{equation}
through the same method.
Moreover, her method can be used to prove the asymptotic formula
\begin{equation}
\begin{aligned}
\label{true_Ruppel}
\sum_{n\le X}R(n,q,a,b)&=\frac{X^2}{2\phi(q)^2}
-\sum_{\chi\ppmod{q}}\frac{\overline{\chi}(a)+\overline{\chi}(b)}{\phi(q)^2}
\sum_{\beta_\chi}\frac{X^{\beta_\chi+1}}{\beta_\chi(\beta_\chi+1)}\\
&+\sum_{\chi,\,\psi\ppmod{q}}
\frac{\overline{\chi}(a)\overline{\psi}(b)}{\phi(q)^2}
\sum_{\beta_\chi,\,\beta_\psi}
\frac{\Gamma(\beta_\chi)\Gamma(\beta_\psi)}{\Gamma(\beta_\chi+\beta_\psi)}
\frac{X^{\beta_\chi+\beta_\psi}}{\beta_\chi+\beta_\psi}
+O\left(X^{3/2}(\log q)^2\right),
\end{aligned}
\end{equation}
where $\beta_\chi$ runs through all real zeros of $L(s,\chi)$
with $\beta_\chi\ge 1/2$.

These results correspond to the result of Fujii \cite{Fujii1}
for the ordinary Goldbach Problem.
Fujii \cite{Fujii1} proved, for the representation function
\[R(n)=\sum_{k+l=n}\Lambda(k)\Lambda(l)\]
of the ordinary Goldbach problem, an asymptotic formula
\[\sum_{n\le X}R(n)=\frac{X^2}{2}+O\left(X^{3/2}\right)\]
under the Riemann Hypothesis (RH).
This was improved by Fujii \cite{Fujii2} himself to
\begin{equation}
\label{Fujii}
\sum_{n\le X}R(n)=\frac{X^2}{2}
-2\sum_\rho\frac{X^{\rho+1}}{\rho(\rho+1)}+O\left((X\log X)^{4/3}\right)
\end{equation}
under RH, where $\rho$ runs through all non-trivial zeros of Riemann zeta function.
After this pioneering work of Fujii,
the error term on the right-hand side of (\ref{Fujii}) was improved to
\begin{align*}
\ll X(\log X)^5\quad&
(\text{by Bhowmik and Schlage-Puchuta \cite{BP1}}),\\
\ll X(\log X)^3\quad&
(\text{by Languasco and Perelli \cite{LZG}}).
\end{align*}
Of course we assume RH in all of these results.
We note that Bhowmik and Schlage-Puchuta proved also the omega result
\begin{equation}
\label{Omega_BS}
=\Omega(X\log\log X)
\end{equation}
for this error term. This omega result is independent from RH or GRH.

In this note, we improve R\"uppel's result (\ref{ruppel}) up to the accuracy
of the result of Languasco and Zaccagnini \cite{LZG}.
Let us introduce
\[W(X,z,w):=\frac{\Gamma(z)\Gamma(w)}{\Gamma(z+w)}
\cdot\frac{X^{z+w}}{z+w},\]
\[G^\kappa(X,\chi):=\sum_{\rho_\chi}W(X,\rho_\chi,\kappa),\quad
G(X,\chi)=G^1(X,\chi),\]
\[G^\kappa(X,q,a):=\sum_{\chi\ppmod{q}}\overline{\chi}(a)G^\kappa(X,\chi), \quad
G(X,q,a)=G^1(X,q,a),\]
where $\chi\ppmod{q}$ are Dirichlet characters,
$\rho_\chi$ runs through all non-trivial zeros of Dirichlet $L$ function $L(s,\chi)$
including real zeros, and $\kappa>0$.
Then our result is:
\begin{thm}
\label{main_thm}
Assume GRHR.
For $X\ge 2$ and positive integers $a_1,a_2,q_1,q_2$
with $(a_1,q_1)=1$ and $(a_2,q_2)=1$, we have
\begin{align*}
\sum_{n\le X}R(n,q_1,a_1,q_2,a_2)
=\frac{1}{\phi(q_1)\phi(q_2)}\left(\frac{X^2}{2}
-G(X,q_1,a_1)-G(X,q_2,a_2)+H(X)\right)+E(X)
\end{align*}
with%
\footnote{Rigorously speaking,
these functions should be written as
$H(X,q_1,a_1,q_2,a_2)$ and $E(X,q_1,a_1,q_2,a_2)$,
but in the following, we use such abbreviations
if there is no possibility of confusion.}
\begin{multline*}
H(X)=\chisum\overline{\chi_1}(a_1)\overline{\chi_2}(a_2)
\left(\sum_{\beta_2}G^{\beta_2}(X,\chi_1)
+\sum_{\beta_1}G^{\beta_1}(X,\chi_2)
-\sum_{\beta_1,\,\beta_2}W(X,\beta_1,\beta_2)\right),
\end{multline*}
\[E(X)\ll X(\log X)(\log q_1X)(\log q_2X),\]
where $\beta_i$ runs through all real non-trivial zeros\footnote{
Therefore, if there are no real zeros,
then the sum over $\beta_i$ turns to be empty.} of $L(s,\chi_i)$
with $\beta_i\ge 1/2$ for $i=1,2$,
and the implicit constant in the error term is absolute.
\end{thm}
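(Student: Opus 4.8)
\medskip\noindent\textit{Sketch of a proof strategy.}
The plan is to adapt, character by character, the Laplace‑transform circle method of Languasco and Zaccagnini. By orthogonality of Dirichlet characters,
\[
\sum_{n\le X}R(n,q_1,a_1,q_2,a_2)
=\frac{1}{\phi(q_1)\phi(q_2)}\sum_{\chi_1\ppmod{q_1}}\sum_{\chi_2\ppmod{q_2}}
\overline{\chi_1}(a_1)\overline{\chi_2}(a_2)\,\mathcal{R}(X,\chi_1,\chi_2),
\]
where $\mathcal{R}(X,\chi_1,\chi_2)=\sum_{m_1+m_2\le X}\Lambda(m_1)\chi_1(m_1)\Lambda(m_2)\chi_2(m_2)$, so it suffices to treat each summand and then sum back. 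Writing $\delta_\chi=1$ for the principal character and $\delta_\chi=0$ otherwise, the goal for the summand is a main term $\delta_{\chi_1}\delta_{\chi_2}X^2/2-\delta_{\chi_2}G(X,\chi_1)-\delta_{\chi_1}G(X,\chi_2)+H_{\chi_1,\chi_2}(X)$, where $H_{\chi_1,\chi_2}$ is the $(\chi_1,\chi_2)$‑summand of $H$, with an error that is still admissible after summation over the $\phi(q_1)\phi(q_2)$ pairs. Rather than handle the sharp cut‑off directly I would pass to the Laplace transforms $\widetilde{S}(z,\chi)=\sum_{n\ge1}\Lambda(n)\chi(n)e^{-nz}$ for $\Re z>0$: since $\sum_{n\ge1}\big(\sum_{m_1+m_2=n}\Lambda(m_1)\chi_1(m_1)\Lambda(m_2)\chi_2(m_2)\big)e^{-nz}=\widetilde{S}(z,\chi_1)\widetilde{S}(z,\chi_2)$, a truncated Laplace/Perron inversion represents $\mathcal{R}(X,\chi_1,\chi_2)$ as a contour integral of $\widetilde{S}(z,\chi_1)\widetilde{S}(z,\chi_2)$ against a simple kernel, the truncation accounting for part of the final error.

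The point of working with $\widetilde{S}$ instead of with $\psi(y,\chi)=\sum_{n\le y}\Lambda(n)\chi(n)$ is that $\widetilde{S}$ obeys the \emph{exact} explicit formula $\widetilde{S}(z,\chi)=\delta_\chi z^{-1}-\sum_{\rho_\chi}\Gamma(\rho_\chi)z^{-\rho_\chi}+(\text{a convergent series of trivial‑zero terms plus a constant})$, with no error term; this is what breaks through the $X^{3/2}$ accuracy of R\"uppel's estimate. Substituting it into $\widetilde{S}(z,\chi_1)\widetilde{S}(z,\chi_2)$ and integrating termwise, the product ``$\delta_{\chi_1}z^{-1}$'' times ``$\delta_{\chi_2}z^{-1}$'' produces $\delta_{\chi_1}\delta_{\chi_2}X^2/2$; the two cross products produce $-\delta_{\chi_2}G(X,\chi_1)$ and $-\delta_{\chi_1}G(X,\chi_2)$ once the resulting integrals are recognised as the (conditionally convergent) series $\sum_{\rho_\chi}W(X,\rho_\chi,1)$; and the product of the two zero‑sums produces the double sum $\Sigma(\chi_1,\chi_2):=\sum_{\rho_{\chi_1}}\sum_{\rho_{\chi_2}}W(X,\rho_{\chi_1},\rho_{\chi_2})$. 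The Beta‑integral $\int_0^1 s^{z-1}(1-s)^{w-1}\,ds=\Gamma(z)\Gamma(w)/\Gamma(z+w)$ is exactly what manufactures the $\Gamma$‑quotients in $W(X,z,w)$. The trivial‑zero and constant pieces, the truncation errors, and the discrepancy between the imprimitive $\psi(\,\cdot\,,\chi)$ and its primitive inducing counterpart all have to be controlled with explicit dependence on $q_1,q_2$ (using $N(T,\chi)\ll T\log(qT)$ and the standard shape of the explicit formula); this is routine bookkeeping but must be carried out carefully to land precisely on $E(X)\ll X(\log X)(\log q_1X)(\log q_2X)$.

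The heart of the matter is $\Sigma(\chi_1,\chi_2)$. Split each zero into one of three types: (i) a real zero $\beta\ge1/2$; (ii) a real zero $\beta<1/2$; (iii) a complex zero, which lies on $\Re s=1/2$ by GRHR. Since each $L(s,\chi_i)$ has only finitely many real zeros and, for fixed $\beta>0$, $\sum_{\rho\ \mathrm{complex}}|W(X,\rho,\beta)|<\infty$, every partial sum of $\Sigma$ over pairs containing a type‑(i) zero converges absolutely; by the inclusion–exclusion identity $\sum_{\beta_2}G^{\beta_2}(X,\chi_1)+\sum_{\beta_1}G^{\beta_1}(X,\chi_2)-\sum_{\beta_1,\beta_2}W(X,\beta_1,\beta_2)=\sum_{\rho_{\chi_1},\rho_{\chi_2}}^{*}W(X,\rho_{\chi_1},\rho_{\chi_2})$, where $*$ restricts to pairs with at least one real zero $\ge1/2$, this portion of $\Sigma$ equals exactly $H_{\chi_1,\chi_2}(X)$. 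The remaining pairs have both zeros of type (ii) or (iii); those in which a type‑(ii) zero occurs have the two real parts adding to less than $1$, hence contribute $\ll X$ with an implied constant polynomial in $\log q_1,\log q_2$ (the number of real zeros being $\ll\log q_i$), and are absorbed by the error. What is left is the sum over pairs of complex zeros, and this is where I expect the real difficulty: with $\rho_j=\tfrac12+i\gamma_j$ one has $|W(X,\rho_1,\rho_2)|\asymp X|\gamma_1+\gamma_2|^{-3/2}$ when $\gamma_1,\gamma_2$ share a sign, so the series is not absolutely convergent and cancellation is unavoidable — this is the analogue for Dirichlet $L$‑functions of the Bhowmik–Schlage‑Puchta and Languasco–Perelli estimates for the classical Goldbach problem. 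I would handle it by undoing the termwise expansion on the complex part: $\sum_{\rho\ \mathrm{complex}}\Gamma(\rho)z^{-\rho}$ is essentially the Laplace transform of $\psi(y,\chi)-\delta_\chi y+\sum_{\beta}y^{\beta}/\beta$, i.e.\ of $\psi(\,\cdot\,,\chi)$ with its main term \emph{and all real zeros} removed, so one can bound the relevant integral of its square, after Cauchy–Schwarz, against the mean value $\int_2^Y\big|\psi(y,\chi)-\delta_\chi y+\sum_{\beta}y^{\beta}/\beta\big|^2\,dy\ll Y^2(\log qY)^2$, which holds under GRHR. The delicate point — as in Languasco–Perelli — is the behaviour of the inversion kernel near the imaginary axis, which produces an extra logarithm and upgrades $X(\log qX)^2$ to $X(\log X)(\log q_1X)(\log q_2X)$; since this bound holds for every pair $(\chi_1,\chi_2)$ and there are $\phi(q_1)\phi(q_2)$ of them, dividing by $\phi(q_1)\phi(q_2)$ returns an error of exactly the size of $E(X)$, while orthogonality collapses the $\delta$‑terms to $X^2/2-G(X,q_1,a_1)-G(X,q_2,a_2)$ and the type‑(i) contributions assemble into $H(X)$.
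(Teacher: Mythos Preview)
Your overall framework---orthogonality, Laplace-weighted generating functions $\widetilde{S}(\alpha,\chi)$, separating off the pole and the real zeros $\beta\ge 1/2$ as ``main'' pieces, and controlling the square of the remainder by a Languasco--Perelli mean-value bound---matches the paper's strategy. The paper organises the cross terms a bit differently (it keeps one factor as the full $\widetilde{S}(\alpha,\chi_j)$ and evaluates $\int T(y,-\alpha)\widetilde{S}(\alpha,\chi_j)z^{-\mu}\,d\alpha$ via a detection lemma in terms of $\psi_\mu(m,\chi_j)$, and only afterwards inserts the explicit formula for $\psi(u,\chi_j)$), whereas you propose to expand both factors by the explicit formula and then analyse the double zero-sum $\Sigma(\chi_1,\chi_2)$ directly. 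Both routes lead to the same $G$-, $H$- and $W$-terms.

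There is, however, a genuine gap in your handling of what you call ``a constant'' in the explicit formula for $\widetilde{S}(z,\chi)$. That constant is not $O(\log q)$ uniformly: for non-principal $\chi$ it is essentially $L'/L(1,\overline{\chi})$, and under GRHR one only has $L'/L(1,\overline{\chi})=1/(1-\beta_0)+O(\log q)$, which for a character carrying a Siegel zero can be as large as $q^{1/2}(\log q)^2$. When this constant is multiplied against the $\delta_{\chi_j}z^{-1}$ or the $\Gamma(\beta_j)z^{-\beta_j}$ piece of the other factor, the resulting term is of size $X/(1-\beta_0)$, which is \emph{not} $\ll X(\log X)(\log q_1X)(\log q_2X)$ for the individual pair $(\chi_1,\chi_2)$. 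Your assertion that ``this bound holds for every pair $(\chi_1,\chi_2)$'' is therefore false, and the step where you sum and divide by $\phi(q_1)\phi(q_2)$ does not go through as written.

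The paper deals with this by \emph{not} absorbing the $L'/L(1,\overline{\chi^\ast})$ pieces into the per-pair error. Instead it carries them as explicit terms in the asymptotic for $\sum_{n\le N}R(n,\chi_1,\chi_2)$, sums over all characters first, and only then bounds the resulting aggregate $S(N)$. At that stage three facts are decisive: Landau's theorem that at most one character modulo $q$ has a Siegel zero; the classical bound $1/(1-\beta_0)\ll q^{1/2}(\log q)^2$; and the denominator $\phi(q_1)\phi(q_2)\gg q_1q_2/(\log\log 4q_1\cdot\log\log 4q_2)$. Together these give $S(N)/(\phi(q_1)\phi(q_2))\ll N(\log 2q_1)(\log 2q_2)$, which is absorbed by $E(X)$. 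You should either imitate this (keep the $L'/L$ terms explicit until after the character sum) or, at minimum, flag that the ``constant'' pieces require Landau's theorem and the Siegel-zero bound rather than routine bookkeeping.
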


If we compare our result with R\"uppel's result,
then we find that in her asymptotic formula (\ref{ruppel}),
the terms correspond to the oscillating terms
\[G(X,q_1,a_1),\ G(X,q_2,a_2),\ H(X)\]
of Theorem \ref{main_thm} are included
in the error term $O(X^{1+\delta}(\log q)^2)$.

R\"uppel obtained the meromorphic continuation of the function
\[\Phi_2(s,a,b,q)
=\sum_{n=1}^\infty \frac{R(n,a,q,b,q)}{n^s}
=\sum_{\substack{k,m=1\\k\equiv a\pmod{q}\\m\equiv b\pmod{q}}}^\infty
\frac{\Lambda(k)\Lambda(m)}{(k+m)^s}\]
to the half plane $\Re s>1$.
This analytic approach to the problem which we are considering is initiated by
Egami and Matsumoto \cite{EM} in 2007.
Meromorphic continuation of the above type relates
to the mean value of the same type as (\ref{two_modulus})
through Perron's formula.
Such a correspondence also exists%
\footnote{See the Remark at the last of this note.}
between R\"uppel's continuation and Theorem \ref{main_thm}.

Our proof of  Theorem \ref{main_thm} follows
the argument of Languasco and Zaccagnini \cite{LZG}
except that we need to treat the real zeros which we permit.
And in order to treat these zeros,
we have to modify the estimate of
Languasco and Perelli \cite{LP}, i.e. we have to treat the terms
arising from real zeros like the main term.
\section{Notations}
Here we briefly summarize the notations which we use in this note.
Some exceptional notations are given at each occurrence.
\begin{eqnarray*}
&x,\alpha&: \text{real numbers,}\\
&X,T\ge 2&: \text{real numbers,}\\
&N,M\ge 2&: \text{positive integers,}\\
&a_1,q_1,a_2,q_2&: \text{positive integers
satisfying $(a_1,q_1)=1$ and $(a_2,q_2)=1$,}\\
&a,b,q&: \text{positive integers satisfying $(ab,q)=1$,}\\
&m,n&: \text{integers
(We impose $m,n\ge1$ when these are used as summation variables.),}\\
&p&: \text{prime numbers,}\\
&\phi(q)&: \text{Euler totient function,}\\
&\Lambda(n)&: \text{von Mangoldt function,}\\
&e(x)&:=\exp(2\pi i x),\\
&\chi\ppmod{q}&: \text{Dirichlet characters of modulus $q$,}
\end{eqnarray*}

Chebyshev functions:
\[\psi(x):=\sum_{n\le x}\Lambda(n),\quad
\psi(x,q,a):=\sum_{\substack{n\le x\\n\equiv a\ppmod{q}}}\Lambda(n),\quad
\psi(x,\chi):=\sum_{n\le x}\chi(n)\Lambda(n).\]
\if0
\begin{itemize}
\item $x,\alpha$: real numbers,
\item $X,T\ge 2$ : real numbers,
\item $N,M\ge 2$ : positive integers,
\item $a_1,q_1,a_2,q_2$ : positive integers
satisfying $(a_1,q_1)=1$ and $(a_2,q_2)=1$,
\item $a,b,q$ : positive integers satisfying $(ab,q)=1$,
\item $m,n$ : integers
(We impose $m,n\ge1$ when these are used as summation variables.),
\item $p$ : prime numbers,
\item $\phi(q)$ : Euler totient function,
\item $\Lambda(n)$ : von Mangoldt function,
\item $e(x):=\exp(2\pi i x)$,
\item $\chi\ppmod{q}$ : Dirichlet characters of modulus $q$,
\item Chebyshev functions :
\[\psi(x):=\sum_{n\le x}\Lambda(n),\quad
\psi(x,q,a):=\sum_{\substack{n\le x\\n\equiv a\ppmod{q}}}\Lambda(n),\quad
\psi(x,\chi):=\sum_{n\le x}\chi(n)\Lambda(n).\]
\end{itemize}
\fi

For Dirichlet characters $\chi\ppmod{q}$, we define
\[E(\chi)=
\begin{cases}
0&(\text{when $\chi$ is non-principal}),\\
1&(\text{when $\chi$ is principal}).
\end{cases}\]
We regard the arithmetic function whose value is always constant 1
as the primitive Dirichlet character of modulus 1,
and the other principal Dirichlet characters of modulus $q\ge 2$
as imprimitive characters.
For an imprimitive character $\chi\ppmod{q}$,
we denote by $\chi^\ast\ppmod{q^\ast}$
the primitive character which induces $\chi\ppmod{q}$.
We use a complex variable $s=\sigma+it$
where $\sigma$ is the real part of $s$ and $t$ is the imaginary part of $s$.

We say that a zero $\rho$ of Dirichlet $L$ functions is \textit{non-trivial}
if $\rho$ satisfies $0<\Re\rho<1$,
and we will denote by $\rho$ with imaginary part $\gamma$
the non-trivial zeros of $L(s,\chi)$.
If we should show explicitly to which character these zeros correspond,
then we attach the character as its suffix, e.g. $\rho_\chi$.
Moreover, $\beta$ denotes real zeros of Dirichlet $L$ functions, and
if $\beta$ is used as a summation variable, then it runs through
all of the real zeros of a given Dirichlet $L$ function with $\ge 1/2$.
We rigorously distinguish two terms \textit{real zero} and \textit{Siegel zero},
see Section \ref{section_Siegel}.
\section{Classical results and remarks on the Siegel zeros}
\label{section_Siegel}
In this section, we recall some classical results on Siegel zeros.
We also introduce notations and conventions for real and Siegel zeros.

It is well-known that there exists an absolute constant
\[c_1>0\]
satisfying the following theorem of Landau.
We use this letter $c_1$ always as the same meaning throughout this paper.
First we introduce the following definition.
\begin{defin}
Let $q\ge 2$ and $\chi\ppmod{q}$ be a Dirichlet character.
A real non-trivial zero $\beta$ of $L(s,\chi)$ is called a \textit{Siegel zero} if
\[\beta>1-\frac{c_1}{\log q}.\]
If $L(s,\chi)$ has a Siegel zero, then we say $\chi\ppmod{q}$ has a Siegel zero.
\end{defin}
Then the well-known theorem of Landau
\footnote{Of course, we weaken the original Landau's theorem
up to the form we need.}
is the following \cite[Corollary 11.8]{MV}.
\begin{thm}
\label{Siegel}
Let $q$ be a positive integer. Then among Dirichlet characters $\chi\ppmod{q}$
of the modulus $q$, there is at most one character which has a Siegel zero.
Moreover, if such a character $\psi\ppmod{q}$ with Siegel zero exists,
then $L(s,\psi)$ has only one Siegel zero even counting with multiplicity.
\end{thm}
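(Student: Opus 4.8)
The plan is to deduce the whole statement from two positivity inequalities of the same flavour: one excluding Siegel zeros for complex characters, and one forcing \emph{repulsion} between the real zeros of distinct real characters. Both come from combining the nonnegativity of the Dirichlet coefficients of a suitable product of $L$-functions with the partial-fraction expansion of its logarithmic derivative. First I would reduce to primitive characters. Since for an imprimitive $\chi\ppmod{q}$ one has $L(s,\chi)=L(s,\chi^\ast)\prod_{p\mid q}(1-\chi^\ast(p)p^{-s})$ and each factor $1-\chi^\ast(p)p^{-s}$ is zero-free in $\sigma>0$, the real zeros of $L(s,\chi)$ in $(0,1)$ coincide with those of $L(s,\chi^\ast)$; moreover distinct characters $\ppmod{q}$ induce distinct primitive characters of conductor $q^\ast\le q$, and $q^\ast\le q$ only strengthens the Siegel condition $\beta>1-c_1/\log q$ relative to the conductor. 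Thus it suffices to bound near-$1$ real zeros of primitive $L$-functions.

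Next I would show that a character carrying a Siegel zero must be real. Let $\chi\ppmod{q}$ be primitive and complex, so that $\chi^2$ is non-principal, and suppose $\beta$ is a real zero of $L(s,\chi)$. Taking real parts in the classical inequality $3+4\cos\theta+\cos2\theta=2(1+\cos\theta)^2\ge0$ applied to the Euler products gives, for real $\sigma>1$,
\[0\le 3\Bigl(-\tfrac{\zeta'}{\zeta}(\sigma)\Bigr)+4\Bigl(-\tfrac{L'}{L}(\sigma,\chi)\Bigr)+\Bigl(-\tfrac{L'}{L}(\sigma,\chi^2)\Bigr).\]
Inserting the partial-fraction bounds $-\zeta'/\zeta(\sigma)\le\tfrac1{\sigma-1}+O(1)$, the estimate $-L'/L(\sigma,\chi)\le -\tfrac1{\sigma-\beta}+c_2\log q$ (isolating the zero $\beta$ and discarding the remaining zeros, each of which contributes a negative term), and $-L'/L(\sigma,\chi^2)\le c_2\log q$ (legitimate since $\chi^2$ is non-principal, so $L(s,\chi^2)$ has no pole), yields
\[0\le\frac{3}{\sigma-1}-\frac{4}{\sigma-\beta}+c_3\log q.\]
Choosing $\sigma=1+\delta/\log q$ and using $1-\beta<c_1/\log q$ makes the right-hand side negative once $c_1$ and $\delta$ are small enough, a contradiction. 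Hence every Siegel-zero character is real.

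The core step treats two distinct real primitive characters $\chi_1,\chi_2$ with Siegel zeros $\beta_1,\beta_2$. As $\chi_1\neq\chi_2$ are real, $\chi_1\chi_2$ is non-principal, so
\[F(s):=\zeta(s)L(s,\chi_1)L(s,\chi_2)L(s,\chi_1\chi_2)\]
has a simple pole at $s=1$ and a Dirichlet series $\sum_n a_n n^{-s}$ with $a_n\ge0$, because the local factors of $\log F$ are proportional to $(1+\chi_1(p^k))(1+\chi_2(p^k))\ge0$. Consequently $-F'/F(\sigma)\ge0$ for $\sigma>1$, while the partial-fraction expansion, after isolating the pole at $1$ and the two real zeros and bounding the remaining zeros by $O(\log(q_1q_2))$, gives the repulsion inequality
\[0\le\frac{1}{\sigma-1}-\frac{1}{\sigma-\beta_1}-\frac{1}{\sigma-\beta_2}+c_4\log(q_1q_2).\]
Taking $\sigma=1+\delta/\log(q_1q_2)$ and using $1-\beta_i<c_1/\log(q_1q_2)$ forces the right-hand side negative for small $c_1,\delta$, a contradiction; hence at most one real character $\ppmod{q}$ has a Siegel zero, and combined with the previous step this gives at most one character in total. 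The simplicity assertion follows from the same inequality applied with a single character $\psi$: replacing $F$ by $\zeta(s)L(s,\psi)$ (again with nonnegative coefficients, the factor $\zeta$ contributing no real zero in $(0,1)$) and letting $\beta_1,\beta_2$ be two real zeros of $L(s,\psi)$ counted with multiplicity, the identical choice of $\sigma$ rules out two such zeros, so the Siegel zero of $\psi$ is unique and simple.

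The main obstacle is supplying the partial-fraction bounds used throughout, namely that for real $\sigma$ slightly larger than $1$ one has $-\tfrac{L'}{L}(\sigma,\chi)=-\sum_{\rho}\tfrac1{\sigma-\rho}+O(\log q(|t|+2))$, with the sum over non-trivial zeros, so that dropping all zeros except the isolated real one(s) is legitimate because $\Re\tfrac1{\sigma-\rho}>0$ for $\sigma>1$. This rests on the Hadamard factorization and the functional equation for primitive $L$-functions together with the standard count of zeros in a unit disc, all of which are classical and which I would quote rather than reprove. Once these are in hand, every remaining step is elementary positivity and the optimization of the parameters $\sigma$ and $\delta$, with $c_1$ fixed once and for all so that the three contradictions hold simultaneously.
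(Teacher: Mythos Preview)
The paper does not give its own proof of this theorem: it is quoted as Landau's classical result with a reference to \cite[Corollary~11.8]{MV}, so there is no in-paper argument to compare against. Your proposal is essentially the standard proof one finds in that reference---reduction to primitive characters, the $3+4\cos\theta+\cos2\theta$ inequality to exclude complex characters, and Landau's product $\zeta L(\chi_1)L(\chi_2)L(\chi_1\chi_2)$ together with positivity of $-F'/F$ on $(1,\infty)$ for the repulsion and simplicity steps---and it is correct in outline.

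Two small points of precision you should tighten if you write this out in full. First, in your treatment of complex $\chi$ the quantity $-L'/L(\sigma,\chi)$ is not real, so the inequality you want is for $-\Re L'/L(\sigma,\chi)$; the crucial fact making the ``drop the other zeros'' step legitimate is that $\Re B(\chi)=-\sum_\rho\Re(1/\rho)$, which is what turns the Hadamard expansion into the clean estimate $-\Re L'/L(\sigma,\chi)\le -1/(\sigma-\beta)+O(\log q)$. Second, in the Landau step $\chi_1\chi_2$ need not be primitive even when $\chi_1,\chi_2$ are, but this is harmless since the Euler factors at ramified primes do not affect zeros in $\sigma>0$, and the $O(\log q)$ absorbs the change in the constant term. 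With these caveats your argument goes through and matches what the cited reference does.
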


Hence in this note we do not call
all real non-trivial zeros of Dirichlet $L$ function Siegel zero,
although R\"uppel used such a terminology.
We denote by $\beta_0$ the Siegel zeros.
If we should show explicitly $\beta_0$ is
the one corresponds to the character $\chi\ppmod{q}$,
then we write like $\beta_0(\chi)$.
In the following of this paper, there are some terms containing Siegel zeros.
We ignore such terms if corresponding Siegel zeros do not exist.

We also recall the following approximation \cite[Theorem 11.4]{MV}
of $L'/L(1,\chi)$.
\begin{lem}
\label{log_dev}
For a non-principal character $\chi\ppmod{q}$, we have
\[\frac{L'}{L}(1,\chi)=\frac{1}{1-\beta_0}+O(\log q),\]
where $\beta_0$ is the Siegel zero of $L(s,\chi)$.
\end{lem}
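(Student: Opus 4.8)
The plan is to combine the Hadamard factorization of $L(s,\chi)$ with the classical zero-free region, reducing the whole statement to the control of the non-trivial zeros lying close to the point $s=1$.

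First I would reduce to primitive characters. If $\chi\ppmod q$ is induced by the primitive character $\chi^\ast\ppmod{q^\ast}$, then $L(s,\chi)=L(s,\chi^\ast)\prod_{p\mid q}(1-\chi^\ast(p)p^{-s})$, whence
\[\frac{L'}{L}(s,\chi)=\frac{L'}{L}(s,\chi^\ast)+\sum_{p\mid q}\frac{\chi^\ast(p)\log p}{p^s-\chi^\ast(p)},\]
and at $s=1$ the sum over $p\mid q$ is $\ll\sum_{p\mid q}\frac{\log p}{p-1}\ll\log q$. The Euler factors have no zeros in $0<\Re s<1$, so the non-trivial zeros (and in particular $\beta_0$) are unchanged, and it suffices to treat primitive $\chi\ppmod q$ with $q\ge2$. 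For such $\chi$ the Hadamard product yields
\[\frac{L'}{L}(s,\chi)=B(\chi)-\tfrac12\log\tfrac q\pi-\tfrac12\frac{\Gamma'}{\Gamma}\Bigl(\tfrac{s+\mathfrak a}2\Bigr)+\sum_{\rho}\Bigl(\frac1{s-\rho}+\frac1\rho\Bigr),\]
with $\mathfrak a\in\{0,1\}$ according to the parity of $\chi$, and I would use the standard identity $\Re B(\chi)=-\sum_\rho\Re\frac1\rho$.

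Set $\sigma_1=1+\frac{c}{\log q}$, where $c$ is the constant of the classical zero-free region. From $\bigl|\frac{L'}{L}(\sigma_1,\chi)\bigr|\le-\frac{\zeta'}{\zeta}(\sigma_1)=\frac1{\sigma_1-1}+O(1)$ one gets $\frac{L'}{L}(\sigma_1,\chi)\ll\log q$, and taking real parts in the Hadamard formula at $s=\sigma_1$ (using $\Re B(\chi)=-\sum_\rho\Re\frac1\rho$) gives the two standard facts
\[\sum_\rho\Re\frac1{\sigma_1-\rho}\ll\log q,\qquad\sum_\rho\frac1{1+\gamma^2}\ll\log q.\]
To descend from $\sigma_1$ to $s=1$ I would integrate the identity $\frac{d}{ds}\frac{L'}{L}(s,\chi)=-\sum_\rho\frac1{(s-\rho)^2}+O(1)$ (valid for $1\le s\le\sigma_1$) applied to $\frac{L'}{L}(s,\chi)-\frac1{s-\beta_0}$, the simple Siegel zero $\beta_0$ (simplicity by Theorem \ref{Siegel}) being subtracted to remove the only pole of $\frac{L'}{L}$ near $s=1$; if no Siegel zero exists this term is simply dropped. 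Since $\frac{L'}{L}(\sigma_1,\chi)\ll\log q$ and $\frac1{\sigma_1-\beta_0}\le\frac1{\sigma_1-1}\ll\log q$, and since the integration path $[1,\sigma_1]$ avoids $\beta_0<1$, the whole lemma is reduced to estimating $\int_1^{\sigma_1}\sum_{\rho\ne\beta_0}\frac1{|s-\rho|^2}\,ds$. This treats the real and imaginary parts uniformly, so the complex case (where $\beta_0$ is absent) needs no separate argument.

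The main obstacle is precisely the bound
\[\sum_{\rho\ne\beta_0}\frac1{|s-\rho|^2}\ll(\log q)^2\qquad(1\le s\le\sigma_1),\]
for then the integral is $\ll(\sigma_1-1)(\log q)^2\ll\log q$ and the lemma follows. The zeros with $|\gamma|>1$ contribute $\ll\sum_\rho\gamma^{-2}\ll\log q$. For $|\gamma|\le1$ I would write $\frac1{|s-\rho|^2}=\frac1{\sigma-\beta}\,\Re\frac1{s-\rho}$; here the classical zero-free region is essential, guaranteeing that every zero $\ne\beta_0$ with $|\gamma|\le1$ satisfies $1-\beta\gg\frac1{\log q}$, so that $\frac1{\sigma-\beta}\ll\log q$. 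A comparison of denominators, legitimate because $\sigma_1-\beta\asymp\sigma-\beta$ once $1-\beta\gg\frac1{\log q}$, then gives $\sum_{|\gamma|\le1}\Re\frac1{s-\rho}\ll\sum_\rho\Re\frac1{\sigma_1-\rho}\ll\log q$, and multiplying the two bounds produces the required $(\log q)^2$. The delicate point throughout is the separation of $\beta_0$: the zero-free region forbids any other zero within $\asymp\frac1{\log q}$ of $s=1$, and Theorem \ref{Siegel} certifies that the exceptional zero is the single simple real zero $\beta_0$. Should this exceptional zero fail to be a Siegel zero in our sense, then $1-\beta_0\gg\frac1{\log q}$, its contribution $\frac1{1-\beta_0}$ is itself $\ll\log q$, and it is harmlessly absorbed into the error term, in agreement with omitting the term $\frac1{1-\beta_0}$.
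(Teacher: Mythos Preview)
Your argument is correct and is essentially the standard textbook proof. Note, however, that the paper does not supply its own proof of this lemma at all: it is merely quoted from Montgomery--Vaughan \cite[Theorem~11.4]{MV}. What you have written is precisely the argument behind that reference (Hadamard product, classical zero-free region, and a comparison between $s=1$ and $s=\sigma_1=1+c/\log q$), so there is nothing to contrast. Your integration device for passing from $\sigma_1$ to $1$ is a minor variant of the usual subtraction $\frac{L'}{L}(1,\chi)-\frac{L'}{L}(\sigma_1,\chi)=\sum_\rho\bigl(\frac1{1-\rho}-\frac1{\sigma_1-\rho}\bigr)+O(1)$, but it leads to the same estimate by the same mechanism, namely $\sigma_1-\beta\asymp 1-\beta$ for every $\rho\ne\beta_0$ with $|\gamma|\le1$, together with the positivity $\sum_\rho\Re\frac1{\sigma_1-\rho}\ll\log q$.
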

\section{A modification of the estimate of Languasco and Perelli with real zeros}
We consider the following generating functions
\[\widetilde{S}(\alpha)=\sum_{n=1}^\infty\Lambda(n)e^{-n/N}e(n\alpha),\]
\[\widetilde{S}(\alpha,\chi)
=\sum_{n=1}^\infty\chi(n)\Lambda(n)e^{-n/N}e(n\alpha).\]
If we introduce the argument $z=1/N-2\pi i\alpha$, then we can express them as
\[\widetilde{S}(\alpha)=\sum_{n=1}^\infty\Lambda(n)e^{-nz},\quad
\widetilde{S}(\alpha,\chi)=\sum_{n=1}^\infty\chi(n)\Lambda(n)e^{-nz}.\]
Let us first recall the following explicit formula for these series.

\begin{lem}
For $N\ge2$, $\alpha\in[-1/2,1/2]$ and a primitive character $\chi\pmod{q}$,
we have
\begin{equation}
\label{explicit1}
\widetilde{S}(\alpha)
=\frac{1}{z}-\sum_\rho z^{-\rho}\,\Gamma(\rho)+O(1),\quad q=1,
\end{equation}
\begin{equation}
\label{explicit2}
\widetilde{S}(\alpha,\chi)
=-\sum_\rho z^{-\rho}\,\Gamma(\rho)
+\frac{L'}{L}(1,\overline{\chi})+O(\log qN),\quad q\ge2,
\end{equation}
where $\rho$'s are the non-trivial zeros of $L(s,\chi)$ including real zeros.
\end{lem}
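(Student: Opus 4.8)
The plan is to write each of $\widetilde S(\alpha)$ and $\widetilde S(\alpha,\chi)$ as a Mellin--Barnes integral and then shift the line of integration to the left. Starting from $e^{-w}=\frac{1}{2\pi i}\int_{(c)}\Gamma(s)w^{-s}\,ds$, valid for $c>0$ and $\Re w>0$, I take $w=nz$ (so $\Re w=n/N>0$) and sum over $n$; the interchange is legitimate because $\sum_n\Lambda(n)n^{-c}=-\frac{\zeta'}{\zeta}(c)<\infty$ for $c>1$. This gives, for $c>1$,
\[
\widetilde S(\alpha,\chi)=\frac{1}{2\pi i}\int_{(c)}\Bigl(-\frac{L'}{L}(s,\chi)\Bigr)\Gamma(s)\,z^{-s}\,ds,
\]
and the analogous identity with $-\zeta'/\zeta$ when $q=1$. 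I then push the contour to the line $\Re s=-\tfrac12$, collecting the residue at $s=1$ (present only when $\chi$ is principal, i.e.\ $q=1$), the residues at the non-trivial zeros $\rho$ of $L(s,\chi)$ --- all of which have $\Re\rho>-\tfrac12$, real ones included --- and the residue at $s=0$. The horizontal segments vanish in the limit because $\Gamma$ decays exponentially on vertical strips whereas $L'/L$ grows only like a power of $\log$; and both the residue series and the shifted integral converge absolutely, since $|\Gamma(\rho)z^{-\rho}|$ decays exponentially in $|\Im\rho|$ once $\Re z>0$, i.e.\ $|\arg z|<\tfrac\pi2$.

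The residue bookkeeping produces the asserted shape. The pole of $\zeta$ at $s=1$ contributes $\Gamma(1)z^{-1}=1/z$, and there is no such term for $q\ge2$; each zero $\rho$ contributes $-\Gamma(\rho)z^{-\rho}$; and the pole of $\Gamma$ at $s=0$ contributes $-\frac{\zeta'}{\zeta}(0)=O(1)$ in the case $q=1$, while for $q\ge2$ it contributes, after using the functional equation of $L(s,\chi)$ to trade $L'/L$ near $s=0$ for $L'/L$ near $s=1$, the quantity $-\frac{L'}{L}(0,\chi)=\frac{L'}{L}(1,\overline\chi)+O(\log q)$. When $\chi$ is an even primitive character the point $s=0$ is a trivial zero of $L(s,\chi)$, so there one evaluates a residue at a double pole, which produces an additional $\log z=O(\log N)$; this is why the remainder is $O(\log qN)$ rather than $O(\log q)$ in that case, and it is harmless when $q=1$ since $\zeta(0)\neq0$.

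The heart of the matter is the estimate of the leftover integral $\frac{1}{2\pi i}\int_{(-1/2)}\bigl(-\frac{L'}{L}(s,\chi)\bigr)\Gamma(s)z^{-s}\,ds$. On $\Re s=-\tfrac12$ the function $L(s,\chi)$ has no zero; the functional equation together with $\bigl|\frac{L'}{L}(\tfrac32-it,\overline\chi)\bigr|\le-\frac{\zeta'}{\zeta}(\tfrac32)$ gives $\frac{L'}{L}(-\tfrac12+it,\chi)\ll\log(q(|t|+2))$; Stirling gives $|\Gamma(-\tfrac12+it)|\ll\min(1,|t|^{-1})e^{-\pi|t|/2}$; and $|z^{-s}|=|z|^{1/2}e^{t\arg z}$ with $|\arg z|<\tfrac\pi2$, so the $\Gamma$-factor controls the integral. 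The delicate point is uniformity in $\alpha$: when $|\alpha|N$ is large, $|\arg z|$ is close to $\tfrac\pi2$ and the effective exponential decay of the integrand is only of size $\tfrac\pi2-|\arg z|$, which must be weighed against the magnitude of $|z|$ (a suitable $N$-dependent choice of the line, slightly to the right of $-\tfrac12$, makes this work). I expect this uniform estimate, not the residue calculus, to be the genuine obstacle. For the sharp remainder $O(1)$ in the case $q=1$ one does better to bypass this line altogether: the classical von Mangoldt explicit formula yields an absolutely convergent expansion of $\psi_1(x)=\int_0^x\psi(t)\,dt$, and substituting it into $\widetilde S(\alpha)=z^2\int_1^\infty\psi_1(x)e^{-xz}\,dx$ (two integrations by parts, the boundary terms vanishing) turns every contribution besides $1/z$ and $-\sum_\rho\Gamma(\rho)z^{-\rho}$ into something carrying a factor $z$ or $z^2$, hence $O(1)$.
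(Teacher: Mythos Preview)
Your approach is exactly the one the paper invokes: the paper does not give a proof but simply says ``This is an easy application of the Mellin--Cahen formula'' and refers to Lemma~2 of Languasco--Zaccagnini, which is precisely the Mellin inversion plus contour shift you outline, with the same residue bookkeeping at $s=1$, at the non-trivial zeros, and at $s=0$ (including the double pole for even primitive $\chi$). Your diagnosis that the only genuine work lies in the uniform-in-$\alpha$ estimate of the shifted integral is correct, and that is indeed what the cited reference handles; your alternative route via the absolutely convergent explicit formula for $\psi_1$ is a perfectly good way to obtain the sharper $O(1)$ in the $q=1$ case.
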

This is an easy application of the Mellin-Cahen formula
and we can obtain this explicit formula unconditionally.
For the proof, see \cite[Lemma 2]{LZ_many}

The modified version of the estimate of Languasco and Perelli is the following.
\begin{thm}
\label{LP_real}
Assume GRHR.
For $N\ge 2,\ 0<\xi\le1/2$, and a Dirichlet character $\chi \pmod{q}$, we have
\[\int_{-\xi}^\xi\left|\widetilde{S}(\alpha,\chi)-\frac{E(\chi)}{z}
+\sum_\beta\frac{\Gamma(\beta)}{z^\beta}\right|^2d\alpha
\ll N\xi(\log qN)^2\]
where $\beta$ runs through
the all real non-trivial zeros of $L(s,\chi)$ satisfying $\ge 1/2$.
\end{thm}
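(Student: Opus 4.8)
The plan is to substitute the explicit formula (\ref{explicit1})--(\ref{explicit2}) into the integral and then follow the $L^2$-argument of Languasco and Perelli \cite{LP}, the essential new point being that the real zeros — in particular a possible Siegel zero — have to be carried as main terms rather than thrown into the error.

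\textbf{Reduction to primitive characters.} First I would pass to primitive characters. If $\chi\pmod q$ is induced by the primitive $\chi^{\ast}\pmod{q^{\ast}}$, then
\[
\widetilde S(\alpha,\chi)-\widetilde S(\alpha,\chi^{\ast})
=-\sum_{\substack{p\mid q\\ p\nmid q^{\ast}}}\ \sum_{k\ge 1}\chi^{\ast}(p^{k})\Lambda(p^{k})\,e^{-p^{k}z},
\]
whose modulus is at most $\sum_{p\mid q}(\log p)\sum_{k\ge1}e^{-p^{k}/N}$, and a routine elementary estimate shows this is $\ll\sqrt N\log qN$ uniformly in $\alpha$; hence its contribution to the integral is $\ll N\xi(\log qN)^{2}$. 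Since $L(s,\chi)$ and $L(s,\chi^{\ast})$ have the same non-trivial zeros (hence the same real zeros, so the same sum $\sum_{\beta}$) and $E(\chi)=E(\chi^{\ast})$, it is enough to treat primitive $\chi$. For such $\chi$ the explicit formula gives, writing $E'(\chi):=1$ if $q\ge2$ and $E'(\chi):=0$ if $q=1$,
\[
\widetilde S(\alpha,\chi)-\frac{E(\chi)}{z}+\sum_{\beta}\frac{\Gamma(\beta)}{z^{\beta}}
=-{\sum}^{\flat}_{\rho}\,\frac{\Gamma(\rho)}{z^{\rho}}
+E'(\chi)\,\frac{L'}{L}(1,\overline{\chi})+O(\log qN),
\]
where ${\sum}^{\flat}_{\rho}$ runs over the non-trivial zeros $\rho$ of $L(s,\chi)$ which are either non-real or real with $\Re\rho<\tfrac12$. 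Under GRHR the zeros of the first kind all lie on $\Re\rho=\tfrac12$.

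\textbf{The two routine pieces.} The contribution of the zeros on the critical line I would estimate exactly as in \cite{LP}: squaring out leaves integrals $\int_{-\xi}^{\xi}z^{-\rho}\overline z^{-\rho'}\,d\alpha$ with $\Re\rho=\Re\rho'=\tfrac12$, which are controlled by the rapid decay of $\Gamma$ on vertical lines together with $\#\{\rho:|\Im\rho-t|\le1\}\ll\log(q(|t|+2))$; the diagonal produces the admissible bound $\ll N\xi(\log qN)^{2}$ and the off-diagonal is smaller. For the real zeros with $\Re\rho<\tfrac12$ other than the reflection of a Siegel zero, note that they are the numbers $1-\beta$ with $\beta$ a real zero of $L(s,\chi)$ in $(\tfrac12,1-c_1/\log q]$; the classical inequality $\sum_{\rho}\frac{1-\Re\rho}{(1-\Re\rho)^{2}+(\Im\rho)^{2}}\ll\log q$ (apply it with $\sigma=1+1/\log q$) yields $\sum_{\beta}\Gamma(1-\beta)\ll\log q$, while $|z^{-(1-\beta)}|\le\max(1,|z|^{-1/2})$, and since $\int_{-\xi}^{\xi}\max(1,|z|^{-1})\,d\alpha\ll N\xi$ this piece is $\ll N\xi(\log q)^{2}$.

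\textbf{The Siegel zero: the main obstacle.} If $\chi$ has no Siegel zero, Lemma \ref{log_dev} gives $\frac{L'}{L}(1,\overline{\chi})\ll\log q$ and nothing more is needed. If $\chi$ has a Siegel zero $\beta_{0}$ — by Theorem \ref{Siegel} at most one, and simple — then its reflection $\eta:=1-\beta_{0}$ appears in ${\sum}^{\flat}_{\rho}$ with coefficient $\Gamma(\eta)=\eta^{-1}+O(1)$, which a priori can be as large as a small power of $q$; but Lemma \ref{log_dev} also gives $\frac{L'}{L}(1,\overline{\chi})=\eta^{-1}+O(\log q)$, and the two large quantities cancel:
\[
-\frac{\Gamma(\eta)}{z^{\eta}}+\frac{L'}{L}(1,\overline{\chi})
=\frac{1-z^{-\eta}}{\eta}+O\bigl(|z|^{-\eta}+\log q\bigr)
=(\log z)\int_{0}^{1}z^{-\eta t}\,dt+O\bigl(|z|^{-\eta}+\log q\bigr).
\]
Since $|z|\ge 1/N$ and $\eta<c_1/\log q<\tfrac12$, this is $\ll|\log z|\max(1,|z|^{-1/2})+\log q$, and a short computation distinguishing the ranges $\xi\le1/N$ and $\xi>1/N$ gives $\int_{-\xi}^{\xi}\bigl(|\log z|^{2}\max(1,|z|^{-1})+(\log q)^{2}\bigr)\,d\alpha\ll N\xi(\log qN)^{2}$, which closes the argument. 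The hard part is precisely this: one must recognise that the dangerous $\eta^{-1}$ coming from $\frac{L'}{L}(1,\overline{\chi})$ in the explicit formula is exactly matched by the reflected Siegel zero, so that after cancellation only the harmless divided difference $\eta^{-1}(1-z^{-\eta})$ survives. Everything else is a routine (if somewhat lengthy) adaptation of \cite{LP}.
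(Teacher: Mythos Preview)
Your proposal is correct and follows essentially the same route as the paper: reduce to primitive characters, insert the explicit formula, subtract the real-zero terms $\Gamma(\beta)z^{-\beta}$, show that what remains is the Languasco--Perelli sum over complex zeros plus a quantity that is $O(N^{1/2}\log qN)$ pointwise, and invoke \cite{LP} for the former.

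The one organizational difference is how the cancellation between $L'/L(1,\overline{\chi})$ and the reflected real zeros $\Gamma(1-\beta)z^{-(1-\beta)}$ is arranged. You isolate a possible Siegel zero via Lemma~\ref{log_dev}, cancel its reflection against $L'/L(1,\overline{\chi})=\eta^{-1}+O(\log q)$, and bound the non-Siegel reflections $1-\beta$ by appealing to $\sum_{\beta}\frac{1}{1-\beta+1/\log q}\ll\log q$. The paper instead uses the full expansion $L'/L(1,\overline{\chi})=\sum_{\beta}\frac{1}{1-\beta}+O(\log q)$ (obtained from GRHR and the local density of zeros) and cancels \emph{every} reflected real zero against the corresponding $1/(1-\beta)$ simultaneously, showing $\frac{\Gamma(1-\beta)}{z^{1-\beta}}-\frac{1}{1-\beta}\ll N^{1/2}$ for each $\beta\ge\tfrac12$. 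This uniform treatment is slightly cleaner since it avoids the Siegel/non-Siegel case split, but the underlying idea---that the divided difference $\eta^{-1}(1-z^{-\eta})$ is harmless---is exactly the one you identified. A small remark: your final integral $\int_{-\xi}^{\xi}|\log z|^{2}\max(1,|z|^{-1})\,d\alpha$ is most safely bounded by the trivial pointwise estimate $|\log z|\ll\log N$, $|z|^{-1}\le N$, which immediately gives $\ll N\xi(\log N)^{2}$; the range-splitting you mention is not needed and can produce a spurious extra $\log N$ if carried out carelessly.
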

For the proof of this theorem,
we need to modify only the preliminary calculations
in the original proof of Languasco and Perelli.
The remaining and most important part of the proof is exactly the same one
which Languasco and Perelli give.
Hence we see only how the preliminary calculations should be modified.
\begin{proof}
At first we assume that $\chi$ is primitive.
In the case where $\chi$ is principal, i.e. the case $q=1$,
there is no real zeros.
So the theorem reduces to the original estimate of Languasco and Perelli.
Next we consider the case where $\chi$ is non-principal or $q\ge 2$.
Then the integral we are considering becomes
\begin{equation}
\label{integral_non_principal}
\int_{-\xi}^\xi\left|\widetilde{S}(\alpha,\chi)
+\sum_\beta\frac{\Gamma(\beta)}{z^\beta}\right|^2d\alpha.
\end{equation}
We shall substitute the explicit formula (\ref{explicit2}) into this integral.
Before this substitution, we reform (\ref{explicit2}) slightly.
By the usual approximation for $L(s,\chi)$, we get
\[\frac{L'}{L}(1,\chi)=\sum_{|\gamma|<1}\frac{1}{1-\rho}+O(\log q).\]
By GRHR, we can estimate this as
\[\frac{L'}{L}(1,\chi)=\sum_\beta\frac{1}{1-\beta}+O(\log q),\]
because there is at most $O(\log q)$ zeros in the region $|\gamma|<1$.
Let us substitute this expression into the explicit formula (\ref{explicit2}).
We obtain
\begin{align*}
\widetilde{S}(\alpha,\chi)
=&-\sum_\rho z^{-\rho}\,\Gamma(\rho)
+\sum_\beta\frac{1}{1-\beta}+O(\log qN)\\
=&-\psum_\rho z^{-\rho}\,\Gamma(\rho)
-\sum_\beta\frac{\Gamma(\beta)}{z^{\beta}}
-\sum_\beta\left(\frac{\Gamma(1-\beta)}{z^{1-\beta}}-\frac{1}{1-\beta}\right)
+O\left(N^{1/2}(\log qN)\right),
\end{align*}
where $\psum$ is the sum excluding real zeros.
Now by the existence of the pole 0 of the gamma function,
if $3/4\le\beta<1$, we have
\begin{align*}
\frac{\Gamma(1-\beta)}{z^{1-\beta}}-\frac{1}{1-\beta}
=&\frac{1}{1-\beta}\left(z^{\beta-1}-1\right)+O\left(\frac{1}{|z|^{1-\beta}}\right)\\
=&-\frac{\log z}{1-\beta}\int_0^{1-\beta}z^{-\sigma}d\sigma
+O\left(\frac{1}{|z|^{1-\beta}}\right)
\ll N^{1/4}(\log N)\ll N^{1/2},
\end{align*}
where we used the fact $1/N\ll|z|\ll1$.
If $1/2\le\beta\le3/4$, then we can use the following trivial estimate
\[\frac{\Gamma(1-\beta)}{z^{1-\beta}}-\frac{1}{1-\beta}\ll N^{1/2}.\]
Because there exists at most $O(\log q)$ real zeros, we have
\[\sum_\beta\left(\frac{\Gamma(1-\beta)}{z^{1-\beta}}-\frac{1}{1-\beta}\right)
\ll N^{1/2}(\log q).\]
In this estimate, we have to recall that
we imposed the restriction $\beta\ge1/2$ on the variable $\beta$.
Hence our reformed explicit formula takes the form
\begin{equation}
\label{explicit_reformed}
\widetilde{S}(\alpha,\chi)=
-\psum_\rho z^{-\rho}\,\Gamma(\rho)
-\sum_\beta\frac{\Gamma(\beta)}{z^{\beta}}
+O\left(N^{1/2}(\log qN)\right).
\end{equation}
Hence the integrand of (\ref{integral_non_principal}) can be rewritten as
\[\left|\widetilde{S}(\alpha,\chi)
+\sum_\beta\frac{\Gamma(\beta)}{z^\beta}\right|^2
\ll\left|\psum_\rho z^{-\rho}\,\Gamma(\rho)\right|^2
+N(\log qN)^2.\]
Therefore our integral (\ref{integral_non_principal}) is reduced to
\begin{equation}
\label{pre_R}
\int_{-\xi}^\xi\left|\widetilde{S}(\alpha,\chi)
+\sum_\beta\frac{\Gamma(\beta)}{z^\beta}\right|^2d\alpha
\ll R+N\xi(\log qN)^2
\end{equation}
where $R$ is given by
\[R=\int_{-\xi}^\xi\left|\psum_\rho z^{-\rho}\,\Gamma(\rho)\right|^2d\alpha.\]
In order to estimate this $R$,
we can repeat the argument of Languasco and Perelli straightforwardly.
For example,
see Section 2 of \cite{LP} or the proof of Lemma 7 in \cite{LZ_many}.
Their argument gives the estimate
\[R\ll N\xi(\log qN)^2.\]
Substituting this estimate into (\ref{pre_R}), we finally get
\[\int_{-\xi}^\xi\left|\widetilde{S}(\alpha,\chi)
+\sum_\beta\frac{\Gamma(\beta)}{z^\beta}\right|^2d\alpha
\ll N\xi(\log qN)^2\]
for all primitive characters $\chi\ppmod{q}$.

Now we prove the theorem for all characters $\chi\ppmod{q}$,
i.e. we now do not assume that $\chi\ppmod{q}$ is primitive.
Let $q^\ast$ be the conductor of $\chi\ppmod{q}$ and $\chi^\ast\ppmod{q^\ast}$
be the primitive character which induces $\chi\ppmod{q}$.
Then by the above argument and the original result of Languasco and Perelli,
we have
\[\int_{-\xi}^\xi\left|\widetilde{S}(\alpha,\chi^\ast)
-\frac{E(\chi)}{z}+\frac{\Gamma(\beta)}{z^\beta}\right|^2d\alpha
\ll N\xi(\log q^\ast N)^2\ll N\xi(\log qN)^2.\]
Moreover, we have
\[\widetilde{S}(\alpha,\chi)-\widetilde{S}(\alpha,\chi^\ast)
\ll\sum_{p|q}(\log p)\sum_{k=1}^\infty e^{-p^k/N}\ll(\log q)(\log N)
\ll N^{1/2}(\log qN).\]
Therefore, we obtain the theorem for all Dirichlet characters.
\end{proof}
\section{Detection and calculations of the main and oscillating terms}
Let us now consider the integrals
\begin{equation}
\label{main_integral}
\int_{-1/2}^{1/2}\frac{T(y,-\alpha)}{z^\mu}d\alpha,
\quad\int_{-1/2}^{1/2}T(y,-\alpha)\frac{\widetilde{S}(\alpha,\chi)}{z^\mu}d\alpha,
\end{equation}
where $\mu$ is some positive constant
and $T(y,\alpha)$ is given by
\[T(y,\alpha)=\sum_{m\le y}e(m\alpha).\]
We begin with the following integral formula.
\begin{lem}
\label{integral_formula}
For integers $N,n$ with $N\ge 2$ and a positive real number $\mu>0$,
we have
\[\int_{-1/2}^{1/2}\frac{e(-n\alpha)}{z^\mu}d\alpha=
\begin{cases}
\displaystyle e^{-n/N}\cdot\frac{n^{\mu-1}}{\Gamma(\mu)}
+O\left(\frac{2^\mu}{|n|}\right)&(\text{when $n>0$}),\\
\displaystyle O\left(\frac{2^\mu}{|n|}\right)&(\text{when $n<0$}),\\
\displaystyle O(\log N)&(\text{when $n=0,\ 0<\mu\le1$}).
\end{cases}\]
\end{lem}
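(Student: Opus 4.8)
The plan is to rewrite the $\alpha$-integral as a truncated Bromwich (inverse Laplace) integral, read off the main term from the classical inversion of $z^{-\mu}$, and estimate the truncation error directly. First I substitute $z=1/N-2\pi i\alpha$: since $dz=-2\pi i\,d\alpha$ and $2\pi i\alpha=1/N-z$ we get $e(-n\alpha)=e^{-2\pi i n\alpha}=e^{-n/N}e^{nz}$, while the endpoints $\alpha=\mp1/2$ correspond to $z=1/N\pm\pi i$, so
\[\int_{-1/2}^{1/2}\frac{e(-n\alpha)}{z^\mu}\,d\alpha
=\frac{e^{-n/N}}{2\pi i}\int_{1/N-\pi i}^{1/N+\pi i}\frac{e^{nz}}{z^\mu}\,dz ,\]
the integral being taken upward along $\Re z=1/N,\ |\Im z|\le\pi$, with $z^{-\mu}$ the principal branch (legitimate there since $\Re z=1/N>0$).

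For $n\ne0$ I compare this with the full line $\Re z=1/N$. As $s^{-\mu}$ is the Laplace transform of $t^{\mu-1}/\Gamma(\mu)$ --- equivalently, by Hankel's formula for $1/\Gamma$ --- the Bromwich integral over the whole line equals $n^{\mu-1}/\Gamma(\mu)$ for $n>0$ and $0$ for $n<0$; when $0<\mu\le1$, where this line integral is only conditionally convergent and $z^{-\mu}$ genuinely branches at $0$, one obtains the value by pushing the contour to a keyhole around $(-\infty,0]$ if $n>0$ (precisely Hankel's integral after the scaling $t=nu$) and off to $\Re z\to+\infty$ if $n<0$. The truncation error is the contribution of the part $|\Im z|\ge\pi$ of the line; one integration by parts with antiderivative $e^{nz}/n$ makes it absolutely convergent and yields
\[\Bigl|\int_{\substack{\Re z=1/N\\ |\Im z|\ge\pi}}\frac{e^{nz}}{z^\mu}\,dz\Bigr|
\ll\frac{e^{n/N}}{|n|}\Bigl(\pi^{-\mu}+\mu\int_\pi^\infty t^{-\mu-1}\,dt\Bigr)
\ll\frac{2^\mu\,e^{n/N}}{|n|},\]
and the factor $e^{n\Re z}=e^{n/N}$ generated on the line exactly cancels the $e^{-n/N}$ in front, giving the stated formulas for $n>0$ and $n<0$ (closing left versus right is swapped between the two signs, which is why $n<0$ carries no main term). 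Alternatively one may close the truncated contour itself --- a keyhole to the left for $n>0$, a rectangle to the right for $n<0$ --- and bound the two horizontal sides at $\Im z=\pm\pi$, each of size $\ll\pi^{-\mu}e^{n/N}/|n|$.

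For $n=0$ with $0<\mu\le1$ no contour is needed: on the range of integration $|z|=\sqrt{N^{-2}+4\pi^2\alpha^2}\le 1/N+\pi\ll1$, so $|z|^{-\mu}=|z|^{-1}|z|^{\,1-\mu}\ll|z|^{-1}$ and
\[\Bigl|\int_{-1/2}^{1/2}\frac{d\alpha}{z^\mu}\Bigr|\ll\int_{-1/2}^{1/2}\frac{d\alpha}{\sqrt{N^{-2}+4\pi^2\alpha^2}}\ll\log N .\]
The one real obstacle I anticipate is the inversion step for $0<\mu\le1$: there the vertical-line integral is not absolutely convergent and $z^{-\mu}$ has a genuine branch point at the origin, so $n^{\mu-1}/\Gamma(\mu)$ must be produced through a Hankel/keyhole contour (the small circle about $0$ contributing nothing when $\mu<1$, and being retained when $\mu\ge1$) rather than by a naive residue computation. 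Once that is settled, the tail estimates and the case $n=0$ are entirely routine, and the constant $2^\mu$ comes out with room to spare.
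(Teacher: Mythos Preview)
Your proof is correct and follows essentially the same route as the paper: after the change of variables $z=1/N-2\pi i\alpha$, both you and the paper estimate the tail $|\Im z|\ge\pi$ (equivalently $|\alpha|\ge 1/2$) by a single integration by parts, then evaluate the full-line Bromwich integral via a Hankel contour for $n>0$ and a shift to the right for $n<0$, and handle $n=0$ by the trivial pointwise bound $|z|^{-\mu}\ll|z|^{-1}$. The only difference is presentational --- the paper writes out the Hankel deformation in detail with explicit segments $C_1,\dots,C_4,H_1,H_2,H_3$, whereas you invoke the Laplace inversion of $z^{-\mu}$ directly --- but the underlying argument is identical.
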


\begin{proof}
We first consider the case $n\neq0$.
For arbitrary $Y>X\ge0$, by integration by parts, we have
\begin{align*}
\int_X^Y\frac{e(-n\alpha)}{z^\mu}d\alpha
=&\int_X^Y\frac{e(-n\alpha)}{(1/N-2\pi i\alpha)^\mu}d\alpha\\
=&-\frac{1}{2\pi in}
\left[\frac{e(-n\alpha)}{(1/N-2\pi i\alpha)^\mu}\right]_X^Y
+\frac{\mu}{n}\int_X^Y\frac{e(-n\alpha)}{(1/N-2\pi i\alpha)^{\mu+1}}d\alpha\\
\ll&\frac{1}{|n|X^\mu}+\frac{\mu}{|n|}\int_X^Y\frac{d\alpha}{\alpha^{\mu+1}}
\ll\frac{1}{|n|X^\mu}.
\end{align*}
Therefore we can extend the integral with the following error term:
\[\int_{-1/2}^{1/2}\frac{e(-n\alpha)}{z^\mu}d\alpha
=\int_{-\infty}^{\infty}\frac{e(-n\alpha)}{z^\mu}d\alpha
+O\left(\frac{2^\mu}{|n|}\right).\]
Now we rewrite this completed integral into the complex integral as
\[\int_{-\infty}^{\infty}\frac{e(-n\alpha)}{z^\mu}d\alpha
=\frac{e^{-n/N}}{2\pi i}
\int_{1/N-i\infty}^{1/N+i\infty}\frac{\exp(ns)}{s^\mu}ds.\]
Besides $n\neq0$, let us also assume $n>0$. Let $T,K>0$ be positive real numbers,
and consider the contours of integration
\begin{itemize}
\item $C_1$: The line segment from $1/N-iT$ to $-K-iT$,
\item $C_2$: The line segment from $-K-iT$ to $-K-i/n$,
\item $C_3$: The line segment from $-K+i/n$ to $-K+iT$,
\item $C_4$: The line segment from $-K+iT$ to $1/N+iT$,
\item $H_1(K)$: The line segment from $-K-i/n$ to $-i/n$,
\item $H_2(K)$: The half circle from $-i/n$ to $i/n$\\
\quad\quad\quad\quad with center 0 and radius $1/n$  in the positive direction,
\item $H_3(K)$: The line segment from $i/n$ to $-K+i/n$,
\end{itemize}
and
\[\mathcal{H}(K)=H_1(K)+H_2(K)+H_3(K),\ \mathcal{H}=\mathcal{H}(\infty).\]
We shift the contour to the left to obtain
\[\frac{e^{-n/N}}{2\pi i}
\int_{1/N-i\infty}^{1/N+i\infty}\frac{\exp(ns)}{s^\mu}ds
=\frac{e^{-n/N}}{2\pi i}
\int_{\mathcal{H}(K)}\frac{\exp(ns)}{s^\mu}ds
+\frac{e^{-n/N}}{2\pi i}
\int_{C_1+C_2+C_3+C_4}\frac{\exp(ns)}{s^\mu}ds.\]
Here we have
\[\int_{C_1+C_4}\frac{\exp(ns)}{s^\mu}ds\ll_{n,K}\frac{1}{T^\mu}.\]
Hence letting $T\to\infty$, we get
\begin{align*}
\frac{e^{-n/N}}{2\pi i}
\int_{1/N-i\infty}^{1/N+i\infty}\frac{\exp(ns)}{s^\mu}ds
=\frac{e^{-n/N}}{2\pi i}
&\int_{\mathcal{H}(K)}\frac{\exp(ns)}{s^\mu}ds\\
&+\frac{e^{-n/N}}{2\pi i}
\left(\int_{-K-i\infty}^{-K-i/n}+\int_{-K+i/n}^{-K+i\infty}\right)
\frac{\exp(ns)}{s^\mu}ds.
\end{align*}
For one of the last two integrals, we get
\begin{align*}
&\int_{-K-i\infty}^{-K-i/n}\frac{\exp(ns)}{s^\mu}ds
=2\pi ie^{-nK}\int_{1/2\pi n}^{\infty}
\frac{e(-n\alpha)}{(-K-2\pi i\alpha)^\mu}d\alpha\\
=&-\frac{e^{-nK}}{n}
\left[\frac{e(-n\alpha)}{(-K-2\pi i\alpha)^\mu}\right]_{1/2\pi n}^{\infty}
+\frac{2\pi i\mu e^{-nK}}{n}\int_{1/2\pi n}^{\infty}
\frac{e(-n\alpha)}{(-K-2\pi i\alpha)^{\mu+1}}d\alpha\\
\ll&\frac{1}{K^\mu}
+\frac{\mu}{|n|}\int_0^\infty\frac{1}{(K^2+\alpha^2)^{(\mu+1)/2}}d\alpha
\ll_\mu \frac{1}{K^\mu}.
\end{align*}
The other can be estimated similarly.
Hence letting $K\to\infty$, we get
\[\frac{e^{-n/N}}{2\pi i}
\int_{1/N-i\infty}^{1/N+i\infty}\frac{\exp(ns)}{s^\mu}ds
=\frac{e^{-n/N}}{2\pi i}
\int_{\mathcal{H}}\frac{\exp(ns)}{s^\mu}ds\]
and by the Hankel integral formula for the gamma function, we have
\[\frac{e^{-n/N}}{2\pi i}
\int_{1/N-i\infty}^{1/N+i\infty}\frac{\exp(ns)}{s^\mu}ds
=n^{\mu-1}\frac{e^{-n/N}}{2\pi i}
\int_{n\mathcal{H}}\frac{\exp(s)}{s^\mu}ds
=e^{-n/N}\frac{n^{\mu-1}}{\Gamma(\mu)}.\]
Therefore we have proved
\[\int_{-1/2}^{1/2}\frac{e(-n\alpha)}{z^\mu}d\alpha=
e^{-n/N}\frac{n^{\mu-1}}{\Gamma(\mu)}+O\left(\frac{2^\mu}{n}\right)\]
for the case $n>0$. For the case $n<0$, we can prove the lemma by shifting the
contour of integration to the right and arguing as above.

Now we consider the remaining case $n=0$ and $0<\mu\le1$.
In this case, we can simply estimate
\[\int_{-1/2}^{1/2}\frac{e(-n\alpha)}{z^\mu}d\alpha
\ll N\int_0^{1/N}d\alpha+\int_{1/N}^{1/2}\frac{d\alpha}{\alpha}
\ll \log N\]
as desired, and we finally get the lemma for all cases.
\end{proof}

The first integral of (\ref{main_integral}) can be calculated as follows.
\begin{lem}
\label{t_detect}
For any positive integer $N\ge 2$ and real numbers $2<y,\ 0<\mu\le 2$,
we have
\[\int_{-1/2}^{1/2}\frac{T(y,-\alpha)}{z^\mu}d\alpha
=\frac{1}{\Gamma(\mu)}\sum_{m\le y}e^{-m/N}m^{\mu-1}
+O(\log y).\]
\end{lem}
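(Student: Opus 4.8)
The plan is to reduce the integral to a term-by-term application of Lemma~\ref{integral_formula}. First I would expand the exponential sum,
\[
T(y,-\alpha)=\sum_{1\le m\le y}e(-m\alpha),
\]
a finite linear combination of the characters $e(-m\alpha)$ with $m$ a positive integer, and interchange this sum with the integration over $[-1/2,1/2]$. This is legitimate because the sum is finite and $1/z^\mu$ is continuous and bounded on the range of integration (since $|z|\ge 1/N>0$ there). Thus
\[
\int_{-1/2}^{1/2}\frac{T(y,-\alpha)}{z^\mu}\,d\alpha
=\sum_{1\le m\le y}\int_{-1/2}^{1/2}\frac{e(-m\alpha)}{z^\mu}\,d\alpha.
\]

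Next I would invoke Lemma~\ref{integral_formula} with $n=m>0$ for each term. Since the summation variable $m$ runs only over positive integers, only the first case of that lemma is ever used, and it gives
\[
\int_{-1/2}^{1/2}\frac{e(-m\alpha)}{z^\mu}\,d\alpha
=e^{-m/N}\,\frac{m^{\mu-1}}{\Gamma(\mu)}+O\!\left(\frac{2^\mu}{m}\right).
\]
Summing over $1\le m\le y$ separates the right-hand side into the asserted main term $\frac{1}{\Gamma(\mu)}\sum_{m\le y}e^{-m/N}m^{\mu-1}$ and an error term $\sum_{1\le m\le y}O(2^\mu/m)$.

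Finally I would bound the error. Here the hypothesis $0<\mu\le 2$ enters, giving $2^\mu\le 4$, together with the point that the implicit constant in the error term of Lemma~\ref{integral_formula} is absolute (its $\mu$-dependence is confined to the explicit factor $2^\mu$); then
\[
\sum_{1\le m\le y}O\!\left(\frac{2^\mu}{m}\right)\ll\sum_{1\le m\le y}\frac{1}{m}\ll\log y,
\]
using $y>2$. This yields the lemma. I do not expect any real obstacle: the only steps requiring a moment's care are confirming that the $\mu$-dependence in Lemma~\ref{integral_formula} sits entirely in $2^\mu$, so that $\mu\le 2$ suffices to absorb it, and noting that the $n=0$ case of that lemma never occurs because the summation variable satisfies $m\ge 1$.
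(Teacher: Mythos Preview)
Your proof is correct and follows essentially the same route as the paper's own argument: expand $T(y,-\alpha)$, interchange the finite sum with the integral, apply Lemma~\ref{integral_formula} termwise for $n=m>0$, and bound the accumulated error by the harmonic sum. Your added remarks---that the interchange is justified by finiteness and boundedness, that $2^\mu\le 4$ absorbs the $\mu$-dependence, and that the $n=0$ case never arises---are all accurate and merely make explicit what the paper leaves implicit.
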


\begin{proof}
Interchanging the order of summation and integration,
by Lemma \ref{integral_formula}, we have
\begin{align*}
\int_{-1/2}^{1/2}\frac{T(y,-\alpha)}{z^\mu}d\alpha
=&\sum_{m\le y}\int_{-1/2}^{1/2}\frac{e(-m\alpha)}{z^\mu}d\alpha\\
=&\frac{1}{\Gamma(\mu)}\sum_{m\le y}e^{-m/N}m^{\mu-1}
+O\left(\sum_{m\le y}\frac{1}{m}\right)\\
=&\frac{1}{\Gamma(\mu)}\sum_{m\le y}e^{-m/N}m^{\mu-1}
+O(\log y).\qedhere
\end{align*}
\end{proof}

In order to study the second integral of (\ref{main_integral}), we introduce
\[\psi_\mu(x,\chi):=\sum_{n<x}\chi(n)\Lambda(n)(x-n)^{\mu-1}\]
for Dirichlet character $\chi\ppmod{q}$. Then the result is the following.

\begin{lem}
\label{detect}
For any Dirichlet character $\chi\ppmod{q}$,
any positive integer $N\ge 2$, any real numbers $2<y$ and $\ 0<\mu\le 1$, we have
\[\int_{-1/2}^{1/2}T(y,-\alpha)\frac{\widetilde{S}(\alpha,\chi)}{z^\mu}d\alpha
=\frac{1}{\Gamma(\mu)}\sum_{m\le y}e^{-m/N}\psi_\mu(m,\chi)
+O\left(N(\log yN)+y(\log N)\right).\]
\end{lem}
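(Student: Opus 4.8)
The plan is to open up both factors as series and integrate term by term using Lemma~\ref{integral_formula}. Writing $T(y,-\alpha)=\sum_{m\le y}e(-m\alpha)$ and $\widetilde S(\alpha,\chi)=\sum_{n\ge1}\chi(n)\Lambda(n)e^{-n/N}e(n\alpha)$, and noting that $|z|=|1/N-2\pi i\alpha|\ge 1/N$ on $[-1/2,1/2]$ so that $|z^{-\mu}|\le N^\mu$, the series $\sum_{n\ge1}\chi(n)\Lambda(n)e^{-n/N}z^{-\mu}e(n\alpha)$ converges absolutely and uniformly in $\alpha$; this legitimizes interchanging the finite $m$-sum and the infinite $n$-sum with the integral. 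One is then left with
\[
\int_{-1/2}^{1/2}T(y,-\alpha)\frac{\widetilde S(\alpha,\chi)}{z^\mu}\,d\alpha
=\sum_{m\le y}\sum_{n\ge1}\chi(n)\Lambda(n)e^{-n/N}
\int_{-1/2}^{1/2}\frac{e\big(-(m-n)\alpha\big)}{z^\mu}\,d\alpha,
\]
and each inner integral is evaluated by Lemma~\ref{integral_formula} according to the sign of $m-n$. The hypothesis $0<\mu\le1$ enters exactly here: it is needed for the $n=0$ case of Lemma~\ref{integral_formula} (the diagonal $m=n$, where the integral is $O(\log N)$) and to bound $2^\mu\le2$ in the error terms.

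Next I would isolate the main term, which comes from the pairs with $m>n$: there Lemma~\ref{integral_formula} produces $e^{-(m-n)/N}(m-n)^{\mu-1}/\Gamma(\mu)$, and summing,
\[
\frac{1}{\Gamma(\mu)}\sum_{m\le y}e^{-m/N}\sum_{n<m}\chi(n)\Lambda(n)(m-n)^{\mu-1}
=\frac{1}{\Gamma(\mu)}\sum_{m\le y}e^{-m/N}\psi_\mu(m,\chi),
\]
which is precisely the claimed main term. It then remains to bound the error. The diagonal $m=n$ contributes $\ll(\log N)\sum_{m\le y}\Lambda(m)\ll y\log N$ by Chebyshev's estimate $\psi(y)\ll y$. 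All off-diagonal pairs (both $m>n$ and $m<n$), together with the $O(2^\mu/|m-n|)$ error of Lemma~\ref{integral_formula}, contribute
\[
\ll\sum_{m\le y}\sum_{\substack{n\ge1\\ n\ne m}}\frac{\Lambda(n)e^{-n/N}}{|m-n|}
=\sum_{n\ge1}\Lambda(n)e^{-n/N}\sum_{\substack{m\le y\\ m\ne n}}\frac{1}{|m-n|}.
\]
The inner sum is a truncated harmonic sum, hence $\ll\log(yN)$ uniformly in $n$, while $\sum_{n\ge1}\Lambda(n)e^{-n/N}\ll N$ by partial summation from $\psi(t)\ll t$; so the off-diagonal error is $\ll N\log(yN)$. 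Combining, the total error is $O\big(N\log yN+y\log N\big)$, as required.

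The only genuinely delicate step is this off-diagonal estimate. The interchange of summations is harmless since every term is nonnegative, but one has to check that $\sum_{m\le y,\,m\ne n}|m-n|^{-1}\ll\log(yN)$ holds uniformly in $n$: for $n\le 2y$ it is a harmonic sum of length $\ll y$, and for $n>2y$ one has $|m-n|\ge n-y>n/2$, so the sum is $\ll y/n\ll1$. Once this is in hand, the rest is a mechanical substitution of Lemma~\ref{integral_formula} and the recognition of $\psi_\mu(m,\chi)$ in the main term.
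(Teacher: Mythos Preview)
Your proof is correct and follows essentially the same approach as the paper: expand both factors, interchange summation and integration, apply Lemma~\ref{integral_formula} termwise, identify the main term as $\psi_\mu(m,\chi)$, and bound the diagonal and off-diagonal errors separately. The only organizational difference is that the paper splits the off-diagonal error into the two ranges $n>m$ and $n<m$ (obtaining $N\log N$ and $N\log y$ respectively), whereas you treat both at once via the uniform bound $\sum_{m\le y,\,m\ne n}|m-n|^{-1}\ll\log(yN)$; the outcome is the same.
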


\begin{proof}
First we interchange the order of summation and integration
\[\int_{-1/2}^{1/2}T(y,-\alpha)\frac{\widetilde{S}(\alpha,\chi)}{z^\mu}d\alpha
=\sum_{m\le y}\sum_{n=1}^\infty\chi(n)\Lambda(n)e^{-n/N}
\int_{-1/2}^{1/2}\frac{e(-(m-n)\alpha)}{z^\mu}d\alpha.\]
We apply Lemma \ref{integral_formula} to these last integrals,
and we have
\begin{align*}
=\frac{1}{\Gamma(\mu)}\sum_{m\le y}e^{-m/N}&
\sum_{n=1}^{m-1}\chi(n)\Lambda(n)(m-n)^{\mu-1}\\
+&O\left(2^\mu\sum_{m\le y}\sum_{\substack{n=1\\n\neq m}}^\infty
\Lambda(n)\frac{e^{-n/N}}{|m-n|}
+(\log N)\sum_{m\le y}\Lambda(m)e^{-m/N}\right).
\end{align*}
Here the error terms can be estimated as
\begin{align*}
2^\mu\sum_{m\le y}\sum_{n=m+1}^\infty
\Lambda(n)\frac{e^{-n/N}}{|m-n|}
\ll&\sum_{n=1}^\infty\Lambda(n)e^{-n/N}\sum_{m<n}\frac{1}{|m-n|}\\
\ll&\sum_{n=1}^\infty\Lambda(n)(\log2n)e^{-n/N}
\ll N(\log N),\\
2^\mu\sum_{m\le y}\sum_{n=1}^{m-1}
\Lambda(n)\frac{e^{-n/N}}{|m-n|}
\ll&\sum_{n=1}^\infty\Lambda(n)e^{-n/N}\sum_{n<m\le y}\frac{1}{|m-n|}\\
\ll&(\log y)\sum_{n=1}^\infty\Lambda(n)e^{-n/N}\\
\ll&N(\log y),
\intertext{and}
(\log N)\sum_{m\le y}\Lambda(m)e^{-m/N}\ll&\psi(y)(\log N)\ll y(\log N).
\end{align*}
Therefore, we have
\[\int_{-1/2}^{1/2}T(y,-\alpha)\frac{\widetilde{S}(\alpha,\chi)}{z^\mu}d\alpha
=\frac{1}{\Gamma(\mu)}\sum_{m\le y}e^{-m/N}\psi_\mu(m,\chi)
+O\left(N(\log yN)+y(\log N)\right).\qedhere\]
\end{proof}

We now calculate the main term of Lemma \ref{detect} explicitly.
\begin{lem}
\label{cal_osc}
Let $\chi\ppmod{q}$ be a Dirichlet character,
$M\ge2$ be a positive integer and $1/2<\mu\le1$ be a real number.
Moreover let $\chi^\ast\ppmod{q^\ast}$ be the primitive character
which induces $\chi\ppmod{q}$.
Then we have
\begin{align*}
\sum_{m=1}^M\psi_\mu(m,\chi)
=E(\chi)\frac{M^{\mu+1}}{\mu(\mu+1)}
-G^\mu(M,\chi)
+\frac{M^\mu}{\mu}\cdot\frac{L'}{L}(1,\overline{\chi^\ast})+O(M(\log2q)(\log M)),
\end{align*}
where the term
\[\frac{M^\mu}{\mu}\cdot\frac{L'}{L}(1,\overline{\chi^\ast})\]
appears only if $\chi\ppmod{q}$ is non-principal.
\end{lem}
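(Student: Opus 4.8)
The plan is to collapse the sum over the integers $m$ into a single Riesz mean of positive order, to evaluate that mean by a Cahen--Mellin integral, and then to push the line of integration to the left, picking up the residues at the pole of $L(s,\chi^*)$, at the non-trivial zeros, and at $s=0$. First I would interchange the two summations, so that $\sum_{m=1}^{M}\psi_\mu(m,\chi)=\sum_{n<M}\chi(n)\Lambda(n)\sum_{1\le j\le M-n}j^{\mu-1}$. Since $1/2<\mu\le1$, the elementary estimate $\sum_{1\le j\le K}j^{\mu-1}=K^\mu/\mu+O(1)$ holds uniformly in $K\ge1$ (the ``constant'' $\zeta(1-\mu)$ being bounded on that range), whence
\[\sum_{m=1}^{M}\psi_\mu(m,\chi)=\frac1\mu\,\psi_{\mu+1}(M,\chi)+O\!\Big(\sum_{n<M}\Lambda(n)\Big)=\frac1\mu\,\psi_{\mu+1}(M,\chi)+O(M),\]
where $\psi_{\mu+1}(x,\chi)=\sum_{n<x}\chi(n)\Lambda(n)(x-n)^{\mu}$ is a Riesz mean of \emph{positive} order $\mu$. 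Because $\chi(n)$ and $\chi^*(n)$ differ only on prime powers of primes dividing $q$, and $\sum_{p\mid q}(\log p)\,\#\{k:p^k<M\}\ll(\log 2q)(\log M)$, one has $\psi_{\mu+1}(M,\chi)=\psi_{\mu+1}(M,\chi^*)+O\big(M^\mu(\log M)(\log 2q)\big)$; moreover $E(\chi)=E(\chi^*)$, and $L(s,\chi)$ and $L(s,\chi^*)$ have the same non-trivial zeros, so $G^\mu(M,\chi)=G^\mu(M,\chi^*)$. Thus everything reduces to evaluating $\psi_{\mu+1}(M,\chi^*)$ for the primitive character $\chi^*$. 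For this I would use the Beta integral $\Gamma(s)\Gamma(\mu+1)/\Gamma(s+\mu+1)=\int_0^1 t^{s-1}(1-t)^{\mu}\,dt$ together with Mellin inversion, which gives, for $c>1$, $\frac1{2\pi i}\int_{(c)}\frac{\Gamma(s)\Gamma(\mu+1)}{\Gamma(s+\mu+1)}u^{-s}\,ds=(1-u)^{\mu}$ for $0<u<1$ and $0$ for $u>1$; taking $u=n/M$, multiplying by $M^\mu$, summing against $\chi^*(n)\Lambda(n)$ and interchanging sum and integral (legitimate for $\mu>0$, since $\Gamma(s)/\Gamma(s+\mu+1)\ll(2+|t|)^{-\mu-1}$ makes everything absolutely convergent), this yields the exact formula
\[\psi_{\mu+1}(M,\chi^*)=\frac{\Gamma(\mu+1)}{2\pi i}\int_{(c)}\Big(-\frac{L'}{L}(s,\chi^*)\Big)\frac{\Gamma(s)}{\Gamma(s+\mu+1)}\,M^{s+\mu}\,ds.\]

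Next I would move the contour to $\Re s=-1/2$, the horizontal segments (taken along heights $|t|=T_j\to\infty$ avoiding zeros) vanishing in the limit by Stirling and the classical bound $L'/L(s,\chi^*)\ll\log\big(q(2+|t|)\big)$ off the zeros. Three families of residues occur. The simple pole at $s=1$, present precisely when $\chi^*$ (equivalently $\chi$) is principal, contributes $\Gamma(\mu+1)M^{\mu+1}/\Gamma(\mu+2)=M^{\mu+1}/(\mu+1)$ to $\psi_{\mu+1}$. The non-trivial zeros $\rho$ of $L(s,\chi^*)$ contribute in total $-\Gamma(\mu+1)\sum_\rho\frac{\Gamma(\rho)}{\Gamma(\rho+\mu+1)}M^{\rho+\mu}=-\mu\sum_\rho W(M,\rho,\mu)=-\mu\,G^\mu(M,\chi^*)$, using $\Gamma(\mu+1)=\mu\Gamma(\mu)$ and $(\rho+\mu)\Gamma(\rho+\mu)=\Gamma(\rho+\mu+1)$; here every non-trivial zero, the real ones included, enters exactly as in the definition of $G^\mu$, so no hypothesis on the zeros is invoked. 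Finally $s=0$ is a pole of $\Gamma(s)$ --- a \emph{double} pole when $\chi^*$ is even and non-principal, because then $L(0,\chi^*)=0$ --- and a short Laurent expansion, in which the functional equation relating the completed $L$-function $\xi(s,\chi^*)$ to $\xi(1-s,\overline{\chi^*})$ is used to convert $-L'/L(0,\chi^*)$ (or the analogous double-pole data) into $L'/L(1,\overline{\chi^*})$, shows that this residue contributes $M^\mu\,\frac{L'}{L}(1,\overline{\chi^*})+O\big(M^\mu(\log 2q+\log M)\big)$ when $\chi$ is non-principal, and merely $O(M^\mu)$ when $\chi$ is principal (where $-\zeta'/\zeta(0)=\log 2\pi$ is a constant). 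The leftover integral on $\Re s=-1/2$ is $\ll M^{\mu-1/2}\int_{-\infty}^{\infty}\log\big(q(2+|t|)\big)(2+|t|)^{-\mu-1}\,dt\ll M^{1/2}\log 2q$.

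Putting these contributions together, dividing by $\mu$ as in the first step, and returning from $\chi^*$ to $\chi$, all error terms coalesce into $O\big(M(\log 2q)(\log M)\big)$ and produce the stated formula, with $\frac{M^\mu}{\mu}\cdot\frac{L'}{L}(1,\overline{\chi^*})$ occurring exactly when $\chi$ is non-principal. I expect the main obstacle to be the residue at $s=0$: the coincidence there of the pole of $\Gamma$ with a trivial zero of $L$ for even $\chi^*$ forces one to evaluate a genuine double-pole residue and then, through the functional equation, to isolate the canonical term $\frac{L'}{L}(1,\overline{\chi^*})$ while checking that the residual part --- carrying $\log(q/\pi)$, values of $\Gamma'/\Gamma$, and a $\log M$ --- is really of size $O\big(M(\log 2q)(\log M)\big)$. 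One must also keep every bound uniform in $\mu\in(1/2,1]$, which is routine, since $1/\mu$, $\Gamma(\mu)$, $\Gamma'/\Gamma(\mu+1)$ and $\int_1^\infty t^{-\mu-1}\log t\,dt$ are all bounded on that interval; and it is worth emphasising that the argument --- explicit formula plus contour shift --- is entirely unconditional, the (possibly Siegel) real zeros being absorbed untouched into $G^\mu(M,\chi)$.
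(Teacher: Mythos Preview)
Your argument is correct and complete, but it proceeds differently from the paper. After the common first step (interchanging sums and replacing $\sum_{j\le K}j^{\mu-1}$ by $K^\mu/\mu+O(1)$), the paper rewrites the result as the integral $\int_0^M\psi(u,\chi)(M-u)^{\mu-1}\,du+O(M)$, substitutes the \emph{truncated} explicit formula for $\psi(u,\chi)$ taken from Montgomery--Vaughan (Theorems~12.5 and~12.10), evaluates each term via the Beta integral $\int_0^M u^\rho(M-u)^{\mu-1}\,du=\dfrac{\Gamma(\rho+1)\Gamma(\mu)}{\Gamma(\rho+\mu+1)}M^{\rho+\mu}$, and finally lets $T\to\infty$ in the zero sum. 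In particular the term $\dfrac{M^\mu}{\mu}\dfrac{L'}{L}(1,\overline{\chi^\ast})$ is inherited directly from the constant term in that explicit formula, so no residue calculation at $s=0$ and no appeal to the functional equation are needed.

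Your route instead writes the Riesz mean $\psi_{\mu+1}(M,\chi^\ast)$ as a Cahen--Mellin integral and shifts the contour, recovering the same terms as residues. This is cleaner in that it is self-contained (you do not quote the explicit formula as a black box), and the absolute convergence afforded by the Riesz smoothing lets you avoid truncation altogether. The cost is exactly the step you flag as the obstacle: the residue at $s=0$, where for even $\chi^\ast$ the pole of $\Gamma(s)$ meets the trivial zero of $L(s,\chi^\ast)$, must be worked out via the functional equation to extract $\dfrac{L'}{L}(1,\overline{\chi^\ast})$ from $-\dfrac{L'}{L}(0,\chi^\ast)$; you have identified correctly that the leftover pieces ($\log(q^\ast/\pi)$, digamma values, and a stray $M^\mu\log M$) are all $O\big(M(\log 2q)(\log M)\big)$. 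Both arguments are unconditional and yield the same formula; the paper's is shorter because it outsources the contour work to the cited explicit formula.
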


\begin{proof}
First recalling $1/2<\mu\le1$ and using $\rho(x)=\{x\}-1/2$, we have
\begin{align*}
\sum_{n\le x}n^{\mu-1}
=&\int_{1-}^xu^{\mu-1}d[u]
=\int_{1}^xu^{\mu-1}du-\int_{1-}^xu^{\mu-1}d\rho(u)\\
=&\int_{0}^xu^{\mu-1}du+(\mu-1)\int_1^xu^{\mu-2}\rho(u)du+O(1)
=\int_{0}^xu^{\mu-1}du+O(1).
\end{align*}
Hence we get
\begin{align*}
\sum_{m=1}^M\psi_\mu(m,\chi)
=&\sum_{m=1}^M\sum_{n<m}\chi(n)\Lambda(n)(m-n)^{\mu-1}
=\sum_{n=1}^M\chi(n)\Lambda(n)\sum_{m=n+1}^M(m-n)^{\mu-1}\\
=&\sum_{n=1}^M\chi(n)\Lambda(n)\sum_{m=1}^{M-n}m^{\mu-1}
=\sum_{n=1}^M\chi(n)\Lambda(n)\int_0^{M-n}u^{\mu-1}du+O(M).
\end{align*}
We can rewrite this as
\[=\sum_{n=1}^M\chi(n)\Lambda(n)
\int_n^M(M-u)^{\mu-1}du+O(M)
=\int_0^M\psi(u,\chi)(M-u)^{\mu-1}du+O(M).\]
Now we use the following explicit formula
which can be derived from Theorem 12.5 and Theorem 12.10 of \cite{MV}:
\[\psi(u,\chi)=E(\chi)u
-\sum_{\substack{\rho\\|\gamma|\le T}}\frac{u^\rho}{\rho}
+\frac{L'}{L}(1,\overline{\chi^\ast})
+O\left(\frac{M}{T}(\log qM)^2+(\log2q)(\log M)\right)\]
where $2\le u\le M,\ 2<T\le (qM)^2$, and
$\rho=\beta+i\gamma$ runs through the non-trivial zeros of $L(s,\chi)$
in $0<\sigma<1$, and the term $L'/L(1,\overline{\chi^\ast})$
appears only if $\chi\ppmod{q}$ is non-principal.
Note that we have removed the restriction that $\chi\ppmod{q}$ is primitive
as Theorem 12.5 and Theorem 12.10 of \cite{MV}
at the cost of (See (12.13) of \cite{MV})
\[(\log2q)(\log N).\]
By this explicit formula, we have
\begin{align*}
\sum_{m=1}^M\psi_\mu(m,\chi)
=&\int_0^M\psi(u,\chi)(M-u)^{\mu-1}du+O(M)\\
=&E(\chi)\int_0^Mu(M-u)^{\mu-1}du
-\sum_{\substack{\rho\\|\gamma|\le T}}\frac{1}{\rho}
\int_0^Mu^\rho(M-u)^{\mu-1}du\\
&\quad+\frac{M^\mu}{\mu}\cdot\frac{L'}{L}(1,\overline{\chi^\ast})
+O\left(\frac{M^{\mu+1}}{T}(\log qM)^2+M^\mu(\log2q)(\log M)\right)\\
=&E(\chi)\frac{M^{\mu+1}}{\mu(\mu+1)}
-\sum_{\substack{\rho\\|\gamma|\le T}}\frac{M^{\rho+\mu}}{\rho+\mu}
\cdot\frac{\Gamma(\rho)\Gamma(\mu)}{\Gamma(\rho+\mu)}\\
&\quad+\frac{M^\mu}{\mu}\cdot\frac{L'}{L}(1,\overline{\chi^\ast})
+O\left(\frac{M^{\mu+1}}{T}(\log qM)^2+M^\mu(\log2q)(\log M)\right).
\end{align*}
Now we extend the sum over non-trivial zeros
\[\sum_{\substack{\rho\\|\gamma|\le T}}\frac{M^{\rho+\mu}}{\rho+\mu}
\cdot\frac{\Gamma(\rho)\Gamma(\mu)}{\Gamma(\rho+\mu)}\]
to the sum over all non-trivial zeros.
In order to extend this sum, we recall the following estimate:
\[|\Gamma(s)|\asymp_A|t|^{\sigma-1/2}e^{-\pi|t|/2},
\quad|\sigma|\le A,\ |t|\ge 1\]
which can be derived from Stirling's formula.
Then we can estimate each term of the extended part of the sum by
\[\frac{M^{\rho+\mu}}{\rho+\mu}
\cdot\frac{\Gamma(\rho)\Gamma(\mu)}{\Gamma(\rho+\mu)}
\ll\frac{M^{\mu+1}}{|\gamma|^{\mu+1}}.\]
Hence we can extend the sum with the error
\[\ll M^{\mu+1}\sum_{\substack{\rho\\|\gamma|>T}}
\frac{1}{|\gamma|^{\mu+1}}
=M^{\mu+1}\int_T^\infty\frac{1}{t^{\mu+1}}dN(t,\chi)
\ll\frac{M^{\mu+1}}{T^\mu}(\log qT).\]
Summing up the above calculations, we have
\begin{align*}
\sum_{m=1}^M\psi_\mu(m,\chi)
=E(\chi)&\frac{M^{\mu+1}}{\mu(\mu+1)}
-\sum_\rho\frac{M^{\rho+\mu}}{\rho+\mu}
\cdot\frac{\Gamma(\rho)\Gamma(\mu)}{\Gamma(\rho+\mu)}
+\frac{M^\mu}{\mu}\cdot\frac{L'}{L}(1,\overline{\chi^\ast})\\
+&O\left(\frac{M^{\mu+1}}{T}(\log qM)^2+\frac{M^{\mu+1}}{T^\mu}(\log qT)
+M^\mu(\log2q)(\log M)\right).
\end{align*}
Taking $T=M(\log qM)$, we obtain
\begin{align*}
\sum_{m=1}^M\psi_\mu(m,\chi)
=E(\chi)\frac{M^{\mu+1}}{\mu(\mu+1)}
-\sum_\rho&\frac{M^{\rho+\mu}}{\rho+\mu}
\cdot\frac{\Gamma(\rho)\Gamma(\mu)}{\Gamma(\rho+\mu)}\\
&+\frac{M^\mu}{\mu}\cdot\frac{L'}{L}(1,\overline{\chi^\ast})
+O\left(M(\log2q)(\log M)\right)
\end{align*}
as we claimed.
\end{proof}
\section{Proof of the main theorem}
Now we prove the main theorem.
\begin{proof}[Proof of Theorem \ref{main_thm}]
Let us first define
\[R(n,\chi_1,\chi_2):=
\sum_{k_1+k_2=n}\chi_1(k_1)\Lambda(k_1)\chi_2(k_2)\Lambda(k_2)\]
for Dirichlet characters $\chi_1\pmod{q_1},\,\chi_2\pmod{q_2}$
and consider the mean value of this function
\[\sum_{n=1}^NR(n,\chi_1,\chi_2).\]
By the Fourier coefficient formula, we have
\[\sum_{m\le y}e^{-m/N}R(m,\chi_1,\chi_2)
=\int_{-1/2}^{1/2}\widetilde{S}(\alpha,\chi_1)\widetilde{S}(\alpha,\chi_2)
T(y,-\alpha)d\alpha\]
for $2\le y\le N$. Here introducing
\[\widetilde{R}(\alpha,\chi):=
\widetilde{S}(\alpha,\chi)-\frac{E(\chi)}{z}
+\sum_{\beta_\chi}\frac{\Gamma(\beta_\chi)}{z^{\beta_\chi}},\]
we can expand the above integral as
\begin{align*}
=&\sum_{i,j}\left(I_{E_iS_j}-\sum_{\beta_i}I_{\beta_iS_j}
+\sum_{\beta_j}I_{E_i\beta_j}\right)
-I_{E}-\sum_{\beta}I_{\beta_1\beta_2}+I_R,
\end{align*}
\if0
\begin{align*}
&=E(\chi_1)\int_{-1/2}^{1/2}T(y,-\alpha)
\frac{\widetilde{S}(\alpha,\chi_2)}{z}d\alpha
+E(\chi_2)\int_{-1/2}^{1/2}T(y,-\alpha)
\frac{\widetilde{S}(\alpha,\chi_1)}{z}d\alpha\\
&-\sum_{\beta_1}\Gamma(\beta_1)
\int_{-1/2}^{1/2}T(y,-\alpha)
\frac{\widetilde{S}(\alpha,\chi_2)}{z^{\beta_1}}d\alpha
-\sum_{\beta_2}\Gamma(\beta_2)
\int_{-1/2}^{1/2}T(y,-\alpha)
\frac{\widetilde{S}(\alpha,\chi_1)}{z^{\beta_2}}d\alpha\\
&+E(\chi_2)\sum_{\beta_1}\Gamma(\beta_1)
\int_{-1/2}^{1/2}\frac{T(y,-\alpha)}{z^{\beta_1+1}}d\alpha
+E(\chi_1)\sum_{\beta_2}\Gamma(\beta_2)
\int_{-1/2}^{1/2}\frac{T(y,-\alpha)}{z^{\beta_2+1}}d\alpha\\
&-E(\chi_1)E(\chi_2)
\int_{-1/2}^{1/2}\frac{T(y,-\alpha)}{z^2}d\alpha
-\sum_{\beta_1,\,\beta_2}\Gamma(\beta_1)\Gamma(\beta_2)
\int_{-1/2}^{1/2}\frac{T(y,-\alpha)}{z^{\beta_1+\beta_2}}d\alpha\\
&+\int_{-1/2}^{1/2}T(y,-\alpha)
\widetilde{R}(\alpha,\chi_1)\widetilde{R}(\alpha,\chi_2)d\alpha,
\end{align*}
\fi
where in what follows, $(i,j)$ take values $(1,2)$ or $(2,1)$,
$\sum_{i,j}$ is the sum over such $(i,j)$'s,
$\beta_i$ runs through all real non-trivial zeros of $L(s,\chi_i)$
with $\beta_i\ge 1/2$ for $i=1,2$, and
\begin{gather*}
I_{E_iS_j}:=E(\chi_i)\int_{-1/2}^{1/2}T(y,-\alpha)
\frac{\widetilde{S}(\alpha,\chi_j)}{z}d\alpha,\quad
I_{\beta_iS_j}:=\Gamma(\beta_i)
\int_{-1/2}^{1/2}T(y,-\alpha)
\frac{\widetilde{S}(\alpha,\chi_j)}{z^{\beta_i}}d\alpha,\\
I_{E_i\beta_j}:=E(\chi_i)\Gamma(\beta_j)
\int_{-1/2}^{1/2}\frac{T(y,-\alpha)}{z^{\beta_j+1}}d\alpha,\quad
I_E:=E(\chi_1)E(\chi_2)
\int_{-1/2}^{1/2}\frac{T(y,-\alpha)}{z^2}d\alpha,\\
I_\beta:=\Gamma(\beta_1)\Gamma(\beta_2)
\int_{-1/2}^{1/2}\frac{T(y,-\alpha)}{z^{\beta_1+\beta_2}}d\alpha,\quad
I_R:=\int_{-1/2}^{1/2}T(y,-\alpha)
\widetilde{R}(\alpha,\chi_1)\widetilde{R}(\alpha,\chi_2)d\alpha.
\end{gather*}

First we calculate $I_{E_iS_j},\ I_{\beta_iS_j}$.
These can be calculated by Lemma \ref{detect} as
\[I_{E_iS_j}
=E(\chi_i)\sum_{m\le y}e^{-m/N}\psi(m-1,\chi_j)+O(N\log N),\]
\[I_{\beta_iS_j}=\sum_{m\le y}e^{-m/N}
\psi_{\beta_i}(m,\chi_j)+O(N\log N).\]
Next $I_{E_i\beta_j},\ I_E,\ I_\beta$
are calculated by Lemma \ref{t_detect} as
\[I_{E_i\beta_j}
=\frac{E(\chi_i)}{\beta_j}\sum_{m\le y}e^{-m/N}m^{\beta_j}+O(\log N),\]
\[I_{E_1E_2}
=E(\chi_1)E(\chi_2)\sum_{m\le y}e^{-m/N}m+O(\log N),\]
\[I_{\beta_1\beta_2}
=\frac{\Gamma(\beta_1)\Gamma(\beta_2)}{\Gamma(\beta_1+\beta_2)}
\sum_{m\le y}e^{-m/N}m^{\beta_1+\beta_2-1}+O(\log N).\]

We next estimate the error term $I_R$.
By the Caughy-Schwarz inequality, we have
\[I_R
\ll\left(\int_{-1/2}^{1/2}\left|T(y,-\alpha)\right|
\left|\widetilde{R}(\alpha,\chi_1)\right|^2d\alpha\right)^{1/2}
\left(\int_{-1/2}^{1/2}\left|T(y,-\alpha)\right|
\left|\widetilde{R}(\alpha,\chi_2)\right|^2d\alpha\right)^{1/2}.\]
Let us define
\[J_i:=\int_{-1/2}^{1/2}\left|T(y,-\alpha)\right|
\left|\widetilde{R}(\alpha,\chi_i)\right|^2d\alpha,\quad i=1,2.\]
Note that
\[T(y,-\alpha)\ll\min\left(y,\frac{1}{|\alpha|}\right)\]
for $|\alpha|\le 1/2$. Then Theorem \ref{LP_real} gives the estimate
\begin{align*}
J_i&\ll
y\int_{|\alpha|\le1/y}\left|\widetilde{R}(\alpha,\chi_i)\right|^2d\alpha
+\int_{1/y<|\alpha|\le1/2}
\frac{\left|\widetilde{R}(\alpha,\chi_i)\right|^2}{\alpha}d\alpha\\
&\ll N(\log q_iN)^2
+\sum_{k=0}^{O(\log y)}\frac{y}{2^k}
\int_{2^k/y<|\alpha|\le2^{k+1}/y}
\left|\widetilde{R}(\alpha,\chi_i)\right|^2d\alpha\\
&\ll N(\log q_iN)^2
+\sum_{k=0}^{O(\log y)}\frac{y}{2^k}\cdot\frac{2^{k+1}}{y}N(\log q_iN)^2
\ll N(\log N)(\log q_iN)^2
\end{align*}
for $i=1,2$. Hence we get
\[I_R\ll N(\log N)(\log q_1N)(\log q_2N).\]

Let us introduce
\begin{multline*}
E(m,\chi_1,\chi_2):=
\sum_{i,j}\left(E(\chi_i)\psi(m-1,\chi_j)
-\sum_{\beta_i}\psi_{\beta_i}(m,\chi_j)
+\sum_{\beta_j}\frac{E(\chi_i)}{\beta_j}m^{\beta_j}\right)\\
-E(\chi_1)E(\chi_2)m
-\sum_{\beta_1,\,\beta_2}
\frac{\Gamma(\beta_1)\Gamma(\beta_2)}{\Gamma(\beta_1+\beta_2)}
m^{\beta_1+\beta_2-1}
\end{multline*}
and
\[\Delta(m,\chi_1,\chi_2)=R(m,\chi_1,\chi_2)-E(m,\chi_1,\chi_2).\]
Then the above calculations give
\begin{equation}
\label{abelian_mean}
\sum_{m\le y}e^{-m/N}
\Delta(m,\chi_1,\chi_2)
\ll N(\log N)(\log q_1N)(\log q_2N)
\end{equation}
for  $y\le N$.
By partial summation, we have
\begin{align*}
\sum_{n\le N}\Delta(m,\chi_1,\chi_2)
=&\sum_{n\le N}e^{-n/N}\Delta(n,\chi_1,\chi_2)
\left(e-\frac{1}{N}\int_n^Ne^{y/N}dy\right)\\
=&e\sum_{n\le N}e^{-n/N}\Delta(n,\chi_1,\chi_2)\\
&\quad-\frac{1}{N}\int_1^Ne^{y/N}
\sum_{n\le y}e^{-n/N}\Delta(n,\chi_1,\chi_2)dy\\
\ll& N(\log N)(\log q_1N)(\log q_2N),
\end{align*}
in other words, we get
\begin{equation}
\label{pre_osc}
\sum_{n\le N}R(n,\chi_1,\chi_2)
=\sum_{n\le N}E(n,\chi_1,\chi_2)+O(N(\log N)(\log q_1N)(\log q_2N)).
\end{equation}

Next we shall calculate the right hand side of the last equation.
By Lemma \ref{cal_osc}, we get
\begin{align*}
E(\chi_i)\sum_{n=1}^N\psi(n-1,\chi_j)
=&E(\chi_i)\sum_{n=1}^N\psi(n,\chi_j)+O(N\log N)\\
=&E(\chi_i)E(\chi_j)\frac{N^2}{2}
-E(\chi_i)G(N,\chi_j)\\
&\quad+E(\chi_i)N\cdot\frac{L'}{L}(1,\overline{\chi_j^\ast})
+O(N(\log N)(\log2q_j)),\\
\sum_{n=1}^N\sum_{\beta_i}\psi_{\beta_i}(n,\chi_j)
=E(\chi_j)&\sum_{\beta_i}\frac{N^{\beta_i+1}}{\beta_i(\beta_i+1)}
-\sum_{\beta_i}G^{\beta_i}(N,\chi_j)\\
&+\sum_{\beta_i}\frac{N^{\beta_i}}{\beta_i}
\cdot\frac{L'}{L}(1,\overline{\chi_j^\ast})
+O(N(\log N)(\log2q_i)(\log2q_j)).
\end{align*}
Now let us recall that for $0<d$,
\[\sum_{n\le x}n^{d-1}=\frac{1}{d}x^d+O(1+x^{d-1})\]
holds. So we have
\[\sum_{n=1}^N\sum_{\beta_j}\frac{E(\chi_i)}{\beta_j}n^{\beta_j}
=\sum_{\beta_j}E(\chi_i)\frac{N^{\beta_j+1}}{\beta_j(\beta_j+1)}
+O(N(\log2q_j)),\]
\[\sum_{n=1}^N\sum_{\beta_1,\,\beta_2}
\frac{\Gamma(\beta_1)\Gamma(\beta_2)}{\Gamma(\beta_1+\beta_2)}
n^{\beta_1+\beta_1-1}
=\sum_{\beta_1,\,\beta_2}W(N,\beta_1,\beta_2)+O(N(\log2q_1)(\log2q_2)),\]
and
\[E(\chi_1)E(\chi_2)\sum_{n=1}^Nn=E(\chi_1)E(\chi_2)\frac{N^2}{2}+O(N).\]

Summing up these formulae, we get
\begin{align}
\sum_{n\le N}R(n,\chi_1,\chi_2)
=&E(\chi_1)E(\chi_2)\frac{N^2}{2}
-E(\chi_2)G(N,\chi_1)-E(\chi_1)G(N,\chi_2)\notag\\
+&\sum_{\beta_2}G^{\beta_2}(N,\chi_1)
+\sum_{\beta_1}G^{\beta_1}(N,\chi_2)
-\sum_{\beta_1,\,\beta_2}W(N,\beta_1,\beta_2)\notag\\
\label{character_main}
&\quad+E(\chi_2)N\cdot\frac{L'}{L}(1,\overline{\chi_1^\ast})
+E(\chi_1)N\cdot\frac{L'}{L}(1,\overline{\chi_2^\ast})\\
&\quad-\sum_{\beta_2}\frac{N^{\beta_2}}{\beta_2}
\cdot\frac{L'}{L}(1,\overline{\chi_1^\ast})
-\sum_{\beta_1}\frac{N^{\beta_1}}{\beta_1}
\cdot\frac{L'}{L}(1,\overline{\chi_2^\ast})\notag\\
&\quad\quad\quad\quad\quad\quad\quad\quad
+O(N(\log N)(\log q_1N)(\log q_2N)).\notag
\end{align}

Multiplying (\ref{character_main}) by
\[\frac{\overline{\chi_1}(a_1)\overline{\chi_2}(a_2)}{\phi(q_1)\phi(q_2)},\]
and summing up over characters $\chi_1\ppmod{q_1},\ \chi_2\ppmod{q_2}$,
we have
\begin{multline*}
\sum_{n\le N}R(n,q_1,a_1,q_2,a_2)
=\frac{1}{\phi(q_1)\phi(q_2)}
\left(\frac{N^2}{2}-G(N,q_1,a_2)-G(N,q_2,a_2)+H(N)+S(N)\right)\\
+O(N(\log N)(\log q_1N)(\log q_2N)),
\end{multline*}
where $S(N)$ is defined by
\begin{equation}
\begin{aligned}
\label{S_def}
S(N):=&
N\sum_{\chi_1\neq\chi_0\ppmod{q_1}}\frac{L'}{L}(1,\overline{\chi_1^\ast})
+N\sum_{\chi_2\neq\chi_0\ppmod{q_2}}\frac{L'}{L}(1,\overline{\chi_2^\ast})\\
&-\sum_{\substack{\chi_1\neq\chi_0\ppmod{q_1}\\\chi_2\neq\chi_0\ppmod{q_2}}}
\left(\frac{L'}{L}(1,\overline{\chi_1^\ast})\cdot\overline{\chi_2}(a_2)
\sum_{\beta_2}\frac{N^{\beta_2}}{\beta_2}
+\frac{L'}{L}(1,\overline{\chi_2^\ast})\cdot\overline{\chi_1}(a_1)
\sum_{\beta_1}\frac{N^{\beta_1}}{\beta_1}\right).
\end{aligned}
\end{equation}
These terms are affected by Siegel zeros but not much bigger than the error term.
This can be estimated as
\begin{equation}
\label{final_estimate}
\frac{S(N)}{\phi(q_1)\phi(q_2)}\ll N(\log2q_1)(\log2q_2)
\end{equation}
by recalling Theorem \ref{Siegel}, Lemma \ref{log_dev}, the well-known estimate
\[\phi(q)\gg\frac{q}{\log\log4q},\]
and the famous fact \cite[p.370, Corollary 11.12]{MV}
\[\frac{1}{1-\beta_0(\chi_i)}\ll q_i^{1/2}(\log q_i)^2,\ i=1,2.\]
Substituting (\ref{final_estimate}) into the above formula, we finally arrive at
\begin{multline*}
\sum_{n\le N}R(n,q_1,a_1,q_2,a_2)
=\frac{1}{\phi(q_1)\phi(q_2)}
\left(\frac{N^2}{2}-G(N,q_1,a_2)-G(N,q_2,a_2)+H(N)\right)\\
+O(N(\log N)(\log q_1N)(\log q_2N)).
\end{multline*}
The restriction that our argument $N$ is integer can be removed
by considering the variation of each term
while the argument varies over bounded intervals.
\end{proof}

\begin{xrem}
Our Theorem \ref{main_thm} conflicts with R\"uppel's continuation.
The inconsistency happens on the residues of the poles
\[\rho_\psi+\beta_1,\quad \rho_\chi+\beta_2\]
of the function $\Phi_2(s,a,b,q)$.
The difference between our result and hers is only up to the sign and
this happened because of her minor mistakes on the determination of
the sign of residues
\[-\frac{L'}{L}(s-\rho_\psi,\chi)\]
at \cite[p.30]{Ruppel1} or \cite[p.142]{Ruppel2}.
\end{xrem}

\subsection*{Acknowledgements}
The author would like to thank Professor Kohji Matsumoto
for his suggestion of this problem, his advices and his encouragement.

\end{document}